\definecolor{forestgreen(traditional)}{rgb}{0.0, 0.27, 0.13}
\definecolor{forestgreen(web)}{rgb}{0.13, 0.55, 0.13}
\definecolor{airforceblue}{rgb}{0.36, 0.54, 0.66}
\definecolor{bleudefrance}{rgb}{0.19, 0.55, 0.91}
\definecolor{darkorchid}{rgb}{0.6, 0.2, 0.8}
\definecolor{darkorange}{rgb}{1.0, 0.55, 0.0}
\definecolor{darkspringgreen}{rgb}{0.09, 0.45, 0.27}
\newtheorem{theorem}{Theorem}[section]
\newtheorem{corollary}[theorem]{Corollary}
\newtheorem{lemma}[theorem]{Lemma}
\newtheorem{proposition}[theorem]{Proposition}
\newtheorem{notation and remarks}[theorem]{Definitions and Notations}
\newtheorem{notation and remark}[theorem]{Notation and Remark}
\theoremstyle{definition}
\newtheorem{example}[theorem]{Example}
\theoremstyle{remark}
\newtheorem{remark}[theorem]{Remark}
\def\kk{{\Bbbk}}
\DeclareMathOperator{\rank}{rank}
\def\ds{\displaystyle}
\newcommand \ti[1]{\textit{#1}}
\begin{document}

\title[A family of explicit Waring decompositions of a polynomial]
{A family of explicit Waring decompositions of a polynomial}

\author[K. Han]{Kangjin Han}
\address{Kangjin Han, School of Undergraduate Studies,
Daegu-Gyeongbuk Institute of Science \& Technology (DGIST),
Daegu 42988,
Republic of Korea}
\email{kjhan@dgist.ac.kr}

\author[H. Moon]{Hyunsuk Moon}
\address{Hyunsuk Moon, School of Mathematics, Korea Institute for Advanced Study (KIAS), Seoul 02455, Republic of Korea}
\email{hsmoon87@kias.re.kr, mhs@kaist.ac.kr}

\subjclass[2010]{14P99, 12D05, 13P10, 14A25, 15A21, 14N15}
\keywords{Waring rank, Waring decomposition, Monomials, Symmetric tensor, Complexity}

\begin{abstract}
In this paper we settle some polynomial identity which provides a family of explicit Waring decompositions of any monomial $X_0^{a_0}X_1^{a_1}\cdots X_n^{a_n}$ over a field $\Bbbk$. This gives an upper bound for the Waring rank of a given monomial and naturally leads to an explicit Waring decomposition of any homogeneous form and, eventually, of any polynomial via (de)homogenization. Note that such decomposition is very useful in many applications dealing with polynomial computations, symmetric tensor problems and so on. We discuss some computational aspect of our result as comparing with other known methods and also present a computer implementation for potential use in the end.
\end{abstract}

\maketitle

\section{Introduction}

Throughout the paper, let $\kk$ be an infinite field and $R=\kk[X_0,\ldots ,X_n]=\bigoplus_{d\ge0} R_d$ be the ring of polynomials over $\kk$ and $R_d$ be the $\kk$-vector space of homogeneous polynomials (or forms) of degree $d$.
For a given $F \in R_d$, a \textit{Waring decomposition of $F$ over $\kk$} is defined as a sum 
\begin{equation}\label{waring_decomp}
F=\sum_{i=1}^{r} \lambda_i L_i^d~,
\end{equation}
where $\lambda_i \in \kk$ and $L_i$ is a linear form over $\kk$. The smallest number $r$ for which such a decomposition exists is called \textit{Waring rank of $F$ over $\kk$} and we denote it by $\rank_\kk(F)$.

Earlier studies of Waring decomposition and Waring rank, initiated by works of Sylvester and others, go back to the 19th century (see \cite{IK} for a historical background). But, despite their long history, the Waring ranks for \textit{general} forms over the complex numbers, a long-standing conjecture in this field, were determined only in the 1990s by \cite{AH} and the complex Waring rank of monomials, a specific type of polynomial, has been found in recently \cite{BBT, CCG}. Generally, it turns out that determination of the rank of a form is a very difficult task except some known cases (see e.g. \cite{HL} and references therein).

Over the \textit{real} numbers the Waring rank is even more difficult to compute in general. For some basic cases, the real Waring rank of a monomial  $X_0^{a_0}X_1^{a_1}\cdots X_n^{a_n}$ with $a_i>0$ is known; it is the degree when $n=1$ \cite{BCG} and $\frac{1}{2}\prod_{i=0}^n(a_i+1)$ when $\min(a_i)=1$ \cite{CKOV}. In \cite{CKOV}, they also provide an upper bound for the real rank of any monomial $\frac{1}{2a_j}\prod_{i=0}^n(a_i+a_j)$ where $a_j$ is $\min(a_i)$, but it is not tight in general. The state of art result has been obtained recently by authors in \cite{HM} as follows:

\begin{theorem}[\cite{HM}]\label{main_thm}
For a monomial $X_0^{a_0}X_1^{a_1}\ldots X_n^{a_n}$ with each $a_i>0$, it holds that 
\begin{equation}\label{main_bd}
\rank_\mathbb{R}(X_0^{a_0}X_1^{a_1}\ldots X_n^{a_n})\le\frac{1}{2}\bigg\{\prod_{i=0}^n(a_i+1)-\prod_{i=0}^n(a_i-1)\bigg\}~.
\end{equation}
Further, the same result is true for $\rank_\mathbb{Q}(X_0^{a_0}X_1^{a_1}\ldots X_n^{a_n})$.
\end{theorem}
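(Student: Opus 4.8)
The plan is to deduce the theorem from one explicit polynomial identity. For every exponent vector $\mathbf a=(a_0,\dots,a_n)$ with all $a_i>0$ and $d=a_0+\cdots+a_n$, I want to produce a sum
\[
X_0^{a_0}\cdots X_n^{a_n}=\sum_{\Lambda}\lambda_\Lambda\, L_\Lambda^{d},
\]
in which the index set $\Lambda$ has at most $\tfrac12\{\prod_i(a_i+1)-\prod_i(a_i-1)\}$ elements and the coefficients $\lambda_\Lambda$ and linear forms $L_\Lambda$ are given by closed formulas depending on a few free scalar parameters. Choosing those parameters in $\mathbb Q$ yields simultaneously the real and the rational bound, and the freedom in the parameters is the ``family'' of the title. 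As in the abstract, (de)homogenization afterwards promotes this from monomials to arbitrary forms and polynomials, but that is not needed for the theorem itself.

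I would build the identity by induction on $n$. For $n=0$ there is nothing to prove. The decisive base case is $n=1$, binary monomials: to decompose $X_0^{a_0}X_1^{a_1}$, choose $d=a_0+a_1$ pairwise distinct scalars $c_1,\dots,c_d$ subject to the single relation $e_{a_0}(c_1,\dots,c_d)=0$, where $e_k$ denotes the $k$-th elementary symmetric polynomial; such a tuple exists over $\mathbb Q$ (fix all but two of the $c_i$ and solve for the last two along the resulting rational conic, avoiding finitely many coincidences). A Lagrange-interpolation computation — match the coefficient of each $X_0^{d-m}X_1^{m}$ for $m=0,\dots,d$, the top case $m=d$ being exactly $e_{a_0}(c)=0$ — then forces
\[
X_0^{a_0}X_1^{a_1}=\binom{d}{a_0}^{-1}\sum_{t=1}^{d}\nu_t\,(X_0+c_tX_1)^d,\qquad
\nu_t=\frac{(-1)^{a_0-1}\,e_{a_0-1}(c_1,\dots,\widehat{c_t},\dots,c_d)}{\prod_{s\neq t}(c_t-c_s)},
\]
and $d=a_0+a_1=\tfrac12\{(a_0+1)(a_1+1)-(a_0-1)(a_1-1)\}$ is the claimed bound for $n=1$ (this also re-derives the upper bound of \cite{BCG}). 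For the inductive step I would ``insert'' the factor $X_n^{a_n}$ into a decomposition of $X_0^{a_0}\cdots X_{n-1}^{a_{n-1}}$. Applying the base case to each summand $\ell_\alpha^{d'}X_n^{a_n}$ — a binary monomial in $\ell_\alpha,X_n$ — is too lossy, because it multiplies the number of terms by roughly $d$; instead I would carry, alongside the identity for $M_{\mathbf a}$ (of size $r(\mathbf a):=\tfrac12\{\prod(a_i+1)-\prod(a_i-1)\}$), a companion identity of the same shape of size $\bar r(\mathbf a):=\tfrac12\{\prod(a_i+1)+\prod(a_i-1)\}$, and arrange that the pair $(r,\bar r)$ transforms under ``insert $X_n^{a_n}$'' by the transfer matrix $\left(\begin{smallmatrix}a_n&1\\ 1&a_n\end{smallmatrix}\right)$: its eigenvalues $a_n\pm1$ are exactly what carry $\prod_{i<n}(a_i\pm1)$ to $\prod_{i\le n}(a_i\pm1)$, so the two sizes evolve correctly, and unwinding the recursion gives the advertised closed-form family.

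Each identity would then be verified by brute multinomial expansion: equating the coefficients of the $\binom{d+n}{n}$ monomials of degree $d$ reduces the check to a finite combinatorial (symmetric-function / finite-difference) identity — for $n=1$ precisely $e_{a_0}(c)=0$, and in general the cancellation of the contributions of all multi-indices other than $\mathbf a$. The rational statement costs nothing extra: the construction uses finitely many free parameters constrained only by finitely many conditions — the interpolation constraints and finitely many non-degeneracy conditions (distinct nodes, non-vanishing denominators) — which are all satisfiable over $\mathbb Q$ by a generic choice. The step I expect to be the real obstacle is controlling the count at the inductive step: one must show that the linear forms created when $X_n^{a_n}$ is inserted genuinely amalgamate, so that the total stays equal to $r(\mathbf a)$ and does not balloon into a product of the partial degrees $a_i+a_{i+1}+\cdots$. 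Producing this economy is exactly the role of the companion identity — and of the correction term $-\prod(a_i-1)$ — so pinning down the right companion and proving the amalgamation is where the work lies.
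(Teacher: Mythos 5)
Your base case is correct and complete: the Lagrange-interpolation computation, the identification of the single obstruction $e_{a_0}(c)=0$, the existence of distinct rational nodes satisfying it, and the count $d=\tfrac12\{(a_0+1)(a_1+1)-(a_0-1)(a_1-1)\}$ are all fine. The genuine gap is the inductive step, which you yourself flag as ``where the work lies'' but which is the entire content of the theorem for $n\ge 2$. You never say what polynomial the companion identity of size $\bar r(\mathbf a)$ is supposed to decompose, nor how the operation ``insert $X_n^{a_n}$'' acts on a pair of identities so as to realize the transfer matrix $\left(\begin{smallmatrix}a_n&1\\1&a_n\end{smallmatrix}\right)$; the arithmetic consistency of the recursion (eigenvalues $a_n\pm1$ carrying $\prod_{i}(a_i\pm1)$ forward) is necessary but has no constructive content. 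Worse, there is a concrete obstruction to the amalgamation you hope for: if you insert $X_n^{a_n}$ term by term into $\sum_\alpha\lambda_\alpha\ell_\alpha^{d'}$ with pairwise non-proportional $\ell_\alpha$, the new forms $\ell_\alpha+cX_n$ and $\ell_\beta+c'X_n$ for $\alpha\neq\beta$ are never proportional (they already differ in their $X_0,\dots,X_{n-1}$ parts), so no two terms coming from different $\alpha$ can ever merge. Any economy must therefore come from a globally coordinated choice of nodes across all summands simultaneously, which is exactly what a variable-by-variable induction does not provide.

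For comparison, the paper does not induct on $n$ at all. It writes down in closed form the linear forms $\ell_{A,\mathbf k,\mathbf s}=\sum_{i\notin A}(-1)^{s_i}t^{k_i}X_i$, indexed by subsets $A$ of the set of even exponents, tuples $0\le k_i\le\lfloor(a_i-1)/2\rfloor$, and sign patterns, together with explicit coefficients, and verifies the identity (Theorem~\ref{coeff}) by direct multinomial expansion: a sign-cancellation lemma (Lemma~\ref{lemI}) kills all monomials whose exponents have the wrong parity, and the factors $F_i(t^{b_i})=\prod_j(t^{b_i}-t^{a_i-2j})$ kill the rest. The correction term $-\prod_i(a_i-1)$ arises not from a companion identity but from identifying proportional forms under the simultaneous shift $k_i\mapsto k_i+j$ (Corollary~\ref{reduced_decomp}) combined with an inclusion--exclusion over the subsets $A$; the resulting count (Remark~\ref{kk-rank}) is a short telescoping product. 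To salvage your plan you would essentially have to rediscover this geometric-progression node pattern, at which point the induction no longer does any work.
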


In this article we settle some identity which concerns an explicit Waring decompositions of any monomial. This can be used to recover the bound (\ref{main_bd}) in a direct way. In principle, via `apolarity' one could find an Waring decomposition of a given monomial $M$ over $\mathbb{R}$ or $\mathbb{Q}$ using the sub-ideal of the apolar ideal $M^\perp$ which appeared in \cite{HM}. But, in general this involves such a huge amount of linear algebra computation of a large size system to determine whole coefficients of the decomposition that it is not easy to get the result actually in many cases. 

Here we prove some Waring-type identity which is parametrized by most numbers in the (infinite) base field $\kk$ and has an interesting combinatorial nature (see Theorem \ref{coeff} and Corollary \ref{reduced_decomp}). As a result, we can have a more direct formula for a Waring decomposition of $M$ without relying on a massive linear algebra calculation.

We discuss some consequence of the identities in Section \ref{sect_main} for finding an explicit Waring decomposition of not only a monomial, but also of \textit{any} homogeneous form (to an arbitrary polynomial it can be easily applied via the process of (de)homogenization, too). Especially, in Section \ref{sect_conseq} we consider its computational aspect; it turns out that our method is asymptotically better in the number of summands than the method in \cite{BBDKV11} using a previously known decomposition (see Remark \ref{F&K} for further discussion). We also present an example of the identity in Example \ref{432case} and a software implementation using a symbolic computer algebra system \textsc{Macaulay2} \cite{M2} as well in Section \ref{sect_M2code}. 

Finally, we'd like to mention that almost everything in the paper also does work over a finite field provided that the characteristic is relatively large. But, for brevity we here focus only on the case of an infinite field $\kk$.

\section{Main Result}\label{sect_main}

\begin{notation and remarks}
First of all we define some notions and set notations on them.

\begin{enumerate}
\item Let $\mathbf{a}\in\mathbb{Z}_{\geq 0}^{n+1}$ be a sequence of $n+1$ nonnegative integers (note that such an $\mathbf{a}$ naturally corresponds to a monomial $X_0^{a_0}X_1^{a_1}\cdots X_n^{a_n}\in \Bbbk[X_0,\ldots,X_n]$). $|\mathbf{a}|$ means its sum $\sum_0^n a_i$ and ${|\mathbf{a}|\choose \mathbf{a}}$ denotes the multinomial coefficient $\binom{|\mathbf{a}|}{a_0,a_1,\dots,a_n}$.
\item For a given $\mathbf{a}\in\mathbb{Z}_{\geq 0}^{n+1}$, we set $Z_{\mathbf{a}}:=\{i~|~a_i=0\}$, the set of indices of zeros, $E_{\mathbf{a}}:=\{i~|~a_i \mathrm{~is~even}\}$, the set of even indices and  $m_i=\lfloor\frac{a_i-1}{2}\rfloor$ for $i=0,\ldots,n$.
\item Fix $\mathbf{a}\in\mathbb{Z}_{\geq 0}^{n+1}$. For any set $A$ with $Z_\mathbf{a} \subset A\subset E_\mathbf{a}$, we consider a set of ordered multiples $K_A:=\{(k_i)_{i\not\in A}~|~k_i\in \mathbb{Z}, 0\leq k_i\leq m_i \mathrm{~for~} i\not\in A\}$ and $S_A:=\{(s_i)_{i\not\in A}~|~ s_i\in\{0,1\}\}$ which is the set of all ordered multiples for signs. As a reduction of each set, we also define two specific subsets
\begin{align*}
\overline{K_A}&:=\{(k_i)_{i\not\in A}~|~k_i\in \mathbb{Z}, 0\leq k_i\leq m_i \mathrm{~for~}  i\not\in A \mathrm{~and~} \min\{k_i\}_{i\not\in A}=0\}\subset K_A~,\\
\overline{S_A}&:=\{(s_i)_{i\not\in A}~|~ s_i\in\{0,1\}, s_{\min\{i:i\not\in A\}}=0\}\subset S_A~.
\end{align*}
\item For a given triple $(A,\mathbf{k},\mathbf{s})$ where $A$ is a set such that $Z_\mathbf{a}\subset A\subset E_\mathbf{a}$, $\mathbf{k}\in K_A$ and $\mathbf{s}\in {S}_A$, we set $\ds\ell_{A,\mathbf{k},\mathbf{s}}:=\sum_{i\not\in A} (-1)^{s_i}t^{k_i}X_i$, a linear form in $\kk[X_0,X_1,\dots,X_n]$.
\item Finally, we need to define the following combinatorial function depending on the value of $a_i$;
$$F_i(y):=
\begin{cases}
\ds\prod_{j=1}^{m_i}(y-t^{a_i-2j}) \quad\emph{,~for~} m_i>0\\
1 \quad\quad \quad\quad\quad \quad\quad\emph{,~if~} m_i=-1\emph{~or~}0\end{cases}~. $$
Then $F_i(y)$ is a polynomial function in $\kk[y]$  and has degree $m_i$ unless $m_i=-1$. For a monomial $M$ and a polynomial $f$ in $\Bbbk[y]$, we denotes the coefficient of $M$ in the polynomial $f$ by $c(M,f)$.
\end{enumerate}

\end{notation and remarks}

%

For the proof of Theorem \ref{coeff}, we also prove a lemma on the sum of signs.

\begin{lemma}\label{lemI}
For a sequence of integers $J\in \mathbb{Z}^k$,
$$\sum_{I\in\{0,1\}^k} (-1)^{\sum_{i=1}^k J_i\cdot I_i}=\prod_{i=1}^k(1+(-1)^{J_i})=
\begin{cases}
2^k, \emph{~if all the~} J_i \emph{~are even}\\
0, \emph{~if at least one of~} J_i \emph{~is odd}\end{cases}~,$$
where $\{0,1\}^k$ denotes the set of all sequences of $k$ numbers whose entry is $0$ or $1$.
\end{lemma}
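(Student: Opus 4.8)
The plan is to exploit the complete multiplicativity of the sign function $(-1)^{(-)}$ together with the distributive law; no deeper input is needed. First I would rewrite the summand: since $\sum_{i=1}^k J_i I_i$ is an ordinary sum of integers,
$$(-1)^{\sum_{i=1}^k J_i I_i} = \prod_{i=1}^k (-1)^{J_i I_i}~.$$
Then, because an element $I\in\{0,1\}^k$ is precisely an independent choice of $I_i\in\{0,1\}$ for each $i$, expanding the product (equivalently, a one-line induction on $k$ using distributivity) yields
$$\sum_{I\in\{0,1\}^k}\ \prod_{i=1}^k (-1)^{J_i I_i}\ =\ \prod_{i=1}^k\left(\sum_{I_i\in\{0,1\}} (-1)^{J_i I_i}\right)\ =\ \prod_{i=1}^k\bigl(1+(-1)^{J_i}\bigr)~,$$
where the last step uses $(-1)^{J_i\cdot 0}+(-1)^{J_i\cdot 1}=1+(-1)^{J_i}$. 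This already gives the first claimed equality.

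For the case distinction I would simply observe that $1+(-1)^{J_i}$ equals $2$ when $J_i$ is even and equals $0$ when $J_i$ is odd. Consequently the product $\prod_{i=1}^k\bigl(1+(-1)^{J_i}\bigr)$ equals $2^k$ exactly when every $J_i$ is even, and it vanishes as soon as at least one factor vanishes, i.e. as soon as some $J_i$ is odd. This is the stated two-case formula.

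I do not anticipate any genuine obstacle: the statement is elementary, and the only point deserving (minimal) care is the interchange of the finite sum over $I$ with the product over the index $i$, which is nothing more than the distributive law applied $k$ times and can be recorded as a short induction on $k$ if one wishes to be fully explicit.
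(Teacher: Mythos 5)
Your proposal is correct and follows essentially the same route as the paper's proof: rewrite $(-1)^{\sum J_i I_i}$ as $\prod_i(-1)^{J_i I_i}$, factor the sum over $\{0,1\}^k$ into a product of $k$ two-term sums by distributivity, and read off the case distinction from $1+(-1)^{J_i}\in\{0,2\}$. No issues.
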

\begin{proof}
\begin{align*}
\sum_{I\in\{0,1\}^k} (-1)^{\sum_{i=1}^k J_i\cdot I_i}&=\sum_{I\in\{0,1\}^k} \prod_{i=1}^k (-1)^{J_i\cdot I_i}\\&=\sum_{0\leq I_1\leq 1}\sum_{0\leq I_2\leq 1}\cdots \sum_{0\leq I_k\leq 1}(-1)^{J_1\cdot I_1}(-1)^{J_2\cdot I_2}\cdots (-1)^{J_k\cdot I_k}\\
&=\sum_{0\leq I_1\leq 1}(-1)^{J_1\cdot I_1}\cdots \sum_{0\leq I_k\leq 1}(-1)^{J_k\cdot I_k}\\
&=\prod_{i=1}^k \sum_{0\leq I_k\leq 1} (-1)^{J_i\cdot I_i}=\prod_{i=1}^k (1+(-1)^{J_i})
\end{align*}
\end{proof}

Here is a small example of the equality of Lemma \ref{lemI}. 
\begin{example}
Let $J=(3,4,5)$. Then $k=3$ and the set $\{0,1\}^3$ is
$$\{(0,0,0),(0,0,1),(0,1,0),(0,1,1),(1,0,0),(1,0,1),(1,1,0),(1,1,1)\}.$$
For each $I\in \{0,1\}^3$, $\sum_{i=1}^3 I_i\cdot J_i=I_1J_1+I_2J_2+I_3J_3$ is given as
$$0,5,4,9,3,8,7,~\textrm{and}~12~.$$
Hence, in one side we have 
\begin{align*}
\sum_{I\in \{0,1\}^3} (-1)^{\sum_{i=1}^3 I_i\cdot J_i}&=(-1)^0+(-1)^5+(-1)^4+(-1)^9+(-1)^{3}+(-1)^{8}+(-1)^{7}+(-1)^{12}\\
&=1-1+1-1-1+1-1+1=0
\end{align*}
and in the other side
\begin{align*}
&\prod_{i=1}^k(1+(-1)^{J_i})=(1+(-1)^3)(1+(-1)^4)(1+(-1)^5)=0~.
\end{align*}
\end{example}

Now we prove our main theorem.

\begin{theorem}\label{coeff}
Let $\mathbf{a}\in\mathbb{Z}_{\geq 0}^{n+1}$ be a sequence of $n+1$ nonnegative integers with $d=|\mathbf{a}|$. 
Then, it holds that
\begin{equation}\label{eqeq0}
D_\mathbf{a}\cdot X_0^{a_0}X_1^{a_1}\cdots X_n^{a_n}=\sum_{Z_\mathbf{a}\subset A\subset E_\mathbf{a}}\sum_{(\mathbf{k},\mathbf{s})\in K_A\times \overline{S_A}}C_{A,\mathbf{k},\mathbf{s}}(\ell_{A,\mathbf{k},\mathbf{s}})^d\end{equation}
where
$$D_\mathbf{a}=(-1)^{|Z_\mathbf{a}|}\cdot 2^n \cdot \binom{d}{\mathbf{a}}\cdot\prod_{i\not\in Z_\mathbf{a}} F_i(t^{a_i})$$ and
$$C_{A,\mathbf{k},\mathbf{s}}=(-1)^{|A|}\cdot 2^{|A|}\cdot (-1)^{\sum_{i\not\in A}a_i\cdot s_i}\prod_{i\in A} F_i(1) \prod_{i\not\in A} c(y^{k_i},F_i) $$

\end{theorem}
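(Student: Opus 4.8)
The plan is to prove the identity by expanding the right-hand side of \eqref{eqeq0} using the multinomial theorem, and then matching the coefficient of each monomial $X_0^{b_0}\cdots X_n^{b_n}$ of degree $d$ on both sides. On the left only the monomial $X_0^{a_0}\cdots X_n^{a_n}$ appears, so for every monomial $\mathbf{b}\ne\mathbf{a}$ we must show the coefficient vanishes, and for $\mathbf{b}=\mathbf{a}$ we must recover $D_\mathbf{a}$. Expanding $(\ell_{A,\mathbf{k},\mathbf{s}})^d=\bigl(\sum_{i\notin A}(-1)^{s_i}t^{k_i}X_i\bigr)^d$ by the multinomial theorem, the coefficient of $X_0^{b_0}\cdots X_n^{b_n}$ is $\binom{d}{\mathbf{b}}\prod_{i\notin A}(-1)^{s_ib_i}t^{k_ib_i}$ (and this term only contributes when $b_i=0$ for all $i\in A$, i.e. when $\mathrm{Supp}(\mathbf{b})\cap A=\emptyset$). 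Thus the coefficient of $X^{\mathbf b}$ on the right-hand side is
\begin{equation*}
\binom{d}{\mathbf{b}}\sum_{\substack{Z_\mathbf{a}\subset A\subset E_\mathbf{a}\\ \mathrm{Supp}(\mathbf b)\cap A=\emptyset}}(-1)^{|A|}2^{|A|}\prod_{i\in A}F_i(1)\cdot\sum_{\mathbf s\in\overline{S_A}}(-1)^{\sum_{i\notin A}(a_i+b_i)s_i}\cdot\sum_{\mathbf k\in K_A}\prod_{i\notin A}c(y^{k_i},F_i)\,t^{k_ib_i}.
\end{equation*}

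The key is to evaluate the three nested sums separately. The $\mathbf{k}$-sum factors over $i\notin A$ as $\prod_{i\notin A}\sum_{k=0}^{m_i}c(y^k,F_i)t^{kb_i}=\prod_{i\notin A}F_i(t^{b_i})$, since $F_i$ has degree $m_i$. For the $\mathbf{s}$-sum over $\overline{S_A}$ (signs with the first free coordinate fixed to $0$), I would apply Lemma \ref{lemI}: summing over the full $S_A$ gives $\prod_{i\notin A}(1+(-1)^{a_i+b_i})$, and passing to $\overline{S_A}$ divides this by $2$ when the leading factor is nonzero, so the $\mathbf s$-sum equals $2^{|A^c|-1}$ if $a_i\equiv b_i\pmod 2$ for all $i\notin A$ and $0$ otherwise; here $|A^c|=n+1-|A|$. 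Combining, every term with $\mathbf b\not\equiv\mathbf a$ componentwise mod $2$ (on $A^c$) dies, and since $Z_\mathbf{a}\subset A\subset E_\mathbf{a}$ forces $a_i\equiv b_i$ on $A$ too (both even there — using $b_i=0$), the surviving $\mathbf b$ satisfy $\mathbf b\equiv\mathbf a\pmod 2$ entirely.

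What remains is the sum over admissible $A$. Writing $\varepsilon_i=F_i(1)$ and using $F_i(t^{b_i})$, the $A$-sum becomes $2^{n}\sum_{A}(-1)^{|A|}\prod_{i\in A}F_i(1)\prod_{i\notin A}F_i(t^{b_i})$ over sets $A$ with $Z_\mathbf a\subset A\subset E_\mathbf a$ and $\mathrm{Supp}(\mathbf b)\cap A=\emptyset$. The indices are partitioned: those in $Z_\mathbf a$ are forced into $A$; those outside $E_\mathbf a$ (odd $a_i$) are forced out of $A$; the indices $i\in E_\mathbf a\setminus Z_\mathbf a$ with $b_i\ne 0$ are also forced out; and the remaining indices $i\in E_\mathbf a\setminus Z_\mathbf a$ with $b_i=0$ are free, so the sum factors as a product over those free indices of $\bigl(F_i(t^{0})-F_i(1)\bigr)=\bigl(F_i(1)-F_i(1)\bigr)=0$ — unless there are no such free indices. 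Hence the $A$-sum is nonzero only when every $i\in E_\mathbf a\setminus Z_\mathbf a$ has $b_i\ne 0$; combined with $\mathbf b\equiv\mathbf a\pmod 2$ and $|\mathbf b|=|\mathbf a|=d$, together with $b_i=0$ whenever $a_i=0$, a short counting argument forces $\mathbf b=\mathbf a$. For $\mathbf b=\mathbf a$ the unique surviving $A$ is $A=Z_\mathbf a$, giving $(-1)^{|Z_\mathbf a|}2^{|Z_\mathbf a|}\prod_{i\in Z_\mathbf a}F_i(1)\prod_{i\notin Z_\mathbf a}F_i(t^{a_i})$; since $m_i=-1$ for $i\in Z_\mathbf a$ we have $F_i(1)=1$, and assembling all prefactors (the $2^n$, the multinomial $\binom d{\mathbf a}$, the $\mathbf s$-sum contribution $2^{|A^c|-1}$, and the $A$-term) reproduces exactly $D_\mathbf a$. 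The main obstacle I anticipate is the bookkeeping in the last step: correctly tracking the powers of $2$ coming from $\overline{S_A}$ versus $C_{A,\mathbf k,\mathbf s}$ and verifying that the vanishing of the $A$-sum for $\mathbf b\ne\mathbf a$ is not accidentally cancelled, i.e. that the parity and support constraints genuinely isolate $\mathbf b=\mathbf a$.
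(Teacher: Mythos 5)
Your overall strategy coincides with the paper's: expand by the multinomial theorem, evaluate the $\mathbf{k}$-sum to $\prod_{i\notin A}F_i(t^{b_i})$, evaluate the $\mathbf{s}$-sum via Lemma \ref{lemI}, and kill the $A$-sum by inclusion--exclusion unless $A$ is forced to equal $Z_\mathbf{a}$. Those steps, and the final evaluation at $\mathbf{b}=\mathbf{a}$, are all correct as written.

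There is, however, a genuine gap at the very last vanishing step. You claim that the surviving constraints --- $\mathbf{b}\equiv\mathbf{a}\pmod 2$ componentwise, $|\mathbf{b}|=|\mathbf{a}|=d$, $b_i=0$ for $i\in Z_\mathbf{a}$, and $b_i\neq 0$ for $i\in E_\mathbf{a}\setminus Z_\mathbf{a}$ --- force $\mathbf{b}=\mathbf{a}$ by ``a short counting argument.'' They do not: for $\mathbf{a}=(3,1)$ the tuple $\mathbf{b}=(1,3)$ satisfies every one of these conditions, and more generally any $\mathbf{b}$ obtained from $\mathbf{a}$ by moving an even amount of degree between two coordinates (without crossing zero or changing parity) survives all of them. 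The mechanism that actually kills these terms is the specific choice of roots of $F_i$: if $0<b_i<a_i$ with $b_i\equiv a_i\pmod 2$, then $b_i=a_i-2j$ for some $1\le j\le m_i$, so $t^{b_i}$ is a root of $F_i(y)=\prod_{j=1}^{m_i}(y-t^{a_i-2j})$ and the factor $F_i(t^{b_i})$ in your retained product vanishes. This forces $b_i\ge a_i$ for all $i\notin Z_\mathbf{a}$, and only then does the degree equality $|\mathbf{b}|=|\mathbf{a}|$ give $\mathbf{b}=\mathbf{a}$. You carry the product $\prod_{i\notin A}F_i(t^{b_i})$ through your computation but never invoke its vanishing, so this step --- which is the reason the $F_i$ are defined the way they are --- needs to be added before the proof is complete.
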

\begin{proof}

$$(\ell_{A,\mathbf{k},\mathbf{s}})^d=\big(\sum_{i\not\in A} (-1)^{s_i}t^{k_i}X_i\big)^d=\sum_{|\mathbf{b}|=d,A\subset Z_{\mathbf{b}}}\binom{d}{\mathbf{b}}\prod_{i\not\in A} (-1)^{s_i\cdot b_i}(t^{k_i})^{b_i} X_i^{b_i}.$$
Rewrite the right hand side of the equation as the coefficient and monomial with degree $d$ as follows
\begin{align*}
&\sum_{Z_\mathbf{a}\subset A\subset E_\mathbf{a}}\sum_{(\mathbf{k},\mathbf{s})\in K_A\times \overline{S_A}}C_{A,\mathbf{k},\mathbf{s}}\bigg(\sum_{|\mathbf{b}|=d,A\subset Z_{\mathbf{b}}}\binom{d}{\mathbf{b}}\prod_{i\not\in A} (-1)^{s_i\cdot b_i}(t^{k_i})^{b_i} X_i^{b_i}\bigg)\\
&=\sum_{Z_\mathbf{a}\subset A\subset E_\mathbf{a}}\sum_{|\mathbf{b}|=d,A\subset Z_{\mathbf{b}}}\binom{d}{\mathbf{b}}\prod_{i\not\in A}X_i^{b_i}\sum_{(\mathbf{k},\mathbf{s})\in K_A\times \overline{S_A}}C_{A,\mathbf{k},\mathbf{s}}\bigg(\prod_{i\not\in A} (-1)^{s_i\cdot b_i}(t^{k_i})^{b_i}\bigg)\\
&=\sum_{|\mathbf{b}|=d}\binom{d}{\mathbf{b}}\prod_{i=0}^nX_i^{b_i}\sum_{Z_\mathbf{a}\subset A\subset E_\mathbf{a}, A\subset Z_\mathbf{b}}\sum_{(\mathbf{k},\mathbf{s})\in K_A\times \overline{S_A}}C_{A,\mathbf{k},\mathbf{s}}\bigg(\prod_{i\not\in A} (-1)^{s_i\cdot b_i}(t^{k_i})^{b_i}\bigg)\\
\end{align*}

Let $$T_\mathbf{b}=\sum_{Z_\mathbf{a}\subset A\subset E_\mathbf{a}, A\subset Z_\mathbf{b}}\sum_{(\mathbf{k},\mathbf{s})\in K_A\times \overline{S_A}}C_{A,\mathbf{k},\mathbf{s}}\bigg(\prod_{i\not\in A} (-1)^{s_i\cdot b_i}(t^{k_i})^{b_i}\bigg)$$
then 
$$\sum_{Z_\mathbf{a}\subset A\subset E_\mathbf{a}}\sum_{(\mathbf{k},\mathbf{s})\in K_A\times \overline{S_A}}C_{A,\mathbf{k},\mathbf{s}}(\ell_{A,\mathbf{k},\mathbf{s}})^d=\sum_{|\mathbf{b}|=d}\binom{d}{\mathbf{b}}\cdot T_\mathbf{b}\cdot X_0^{b_0}X_1^{b_1}\cdots X_n^{b_n}$$

We want to show that all $T_\mathbf{b}=0$ except $\mathbf{b}\neq\mathbf{a}$ and $\binom{d}{\mathbf{a}}T_\mathbf{a}=D_\mathbf{a}$.\\

\noindent\textbf{Step 1} 

Since all the linear forms $\ell_{A,\mathbf{k},\mathbf{s}}$ do not have variables $X_i$ with $i\in Z_\mathbf{a}$, $T_\mathbf{b}=0$ for any $\mathbf{b}$ with $Z_\mathbf{a}\not\subset Z_\mathbf{b}$. Hence we can assume that $Z_\mathbf{a}\subset Z_\mathbf{b}$. So we can reduce
$$T_\mathbf{b}=\sum_{Z_\mathbf{a}\subset A\subset E_\mathbf{a}\cap Z_\mathbf{b}}\sum_{(\mathbf{k},\mathbf{s})\in K_A\times \overline{S_A}}C_{A,\mathbf{k},\mathbf{s}}\bigg((-1)^{\sum_{i\not\in A} s_i\cdot b_i}\prod_{i\not\in A}(t^{k_i})^{b_i}\bigg)$$

\noindent\textbf{Step 2} 

\begin{align*}
T_\mathbf{b}=\sum_{Z_\mathbf{a}\subset A\subset E_\mathbf{a}\cap Z_\mathbf{b}}\sum_{\mathbf{k}\in K_A}\sum_{\mathbf{s}\in \overline{S_A}}&\Big\{(-1)^{|A|}\cdot 2^{|A|}\cdot(-1)^{\sum_{i\not\in A}a_i\cdot s_i}\cdot \prod_{i\in A}F_i(1)\prod_{i\not\in A} c(y^{k_i},F_i)\\
&\cdot(-1)^{\sum_{i\not\in A}b_i\cdot s_i}\prod_{i\not\in A} (t^{k_i})^{b_i}\Big\}
\end{align*}
\begin{align*}
=\sum_{Z_\mathbf{a}\subset A\subset E_\mathbf{a}\cap Z_\mathbf{b}}(-1)^{|A|}\cdot 2^{|A|}\cdot \prod_{i\in A} F_i(1)\cdot&\bigg\{\sum_{\mathbf{k}\in K_A} \prod_{i\not\in A}c(y^{k_i},F_i)\prod_{i\not\in A} (t^{k_i})^{b_i}\\
&\cdot\sum_{\mathbf{s}\in \overline{S_A}}(-1)^{\sum_{i\not\in A}a_i\cdot s_i}(-1)^{\sum_{i\not\in A}b_i\cdot s_i}\bigg\}
\end{align*}
By Lemma \ref{lemI}, 
$$\sum_{\mathbf{s}\in \overline{S_A}} (-1)^{\sum_{i\not\in A}a_i\cdot s_i}(-1)^{\sum_{i\not\in A}b_i\cdot s_i}=\sum_{\mathbf{s}\in \overline{S_A}} (-1)^{\sum_{i\not\in A}(a_i+b_i)\cdot s_i}=
\begin{cases}
2^{n-|A|}, &a_i+b_i\equiv 0(\mbox{mod~} 2) \mbox{~for all~} \\&i\not\in A, i\neq \min\{i:i\not\in A\}\\
0, &\mbox{otherwise}
\end{cases}
$$
since $s_{\min\{i\not\in A\}}=0$. Let $\delta$ be the map from $\mathbb{Z}^2$ to $\{0,1\}$ such that $\delta(x,y)=\begin{cases} 1, \mbox{~if~} x\equiv y(\mbox{mod} 2)\\ 0, \mbox{~if~} x\not\equiv y(\mbox{mod} 2)\end{cases}$. Then 
$$\sum_{\mathbf{s}\in \overline{S_A}} (-1)^{\sum_{i\not\in A}a_i\cdot s_i}(-1)^{\sum_{i\not\in A}b_i\cdot s_i}=2^{n-|A|}\prod_{i\not\in A, i\neq\min\{i:i\not\in A\}}\delta(a_i,b_i)$$
For $i\in A\subset E_\mathbf{a}\cap Z_\mathbf{a}$, $a_i$ and $b_i$ are both even. Hence $d=\sum_{i=0}^n a_i=\sum_{i=0}^n b_i$ implies $\sum_{i\not\in A} a_i\equiv \sum_{i\not\in A} b_i$. So if $a_i$ and $b_i$ have same parity for all $i\not\in A, i\neq\min\{i:i\not\in A\}$, then $a_{\min\{i:i\not\in A\}}$ and $b_{\min\{i:i\not\in A\}}$ also have same parity. It means that
$$\prod_{i\not\in A, i\neq\min\{i:i\not\in A\}}\delta(a_i,b_i)=\prod_{i\not\in A} \delta(a_i,b_i)$$
Since $2^{|A|}\cdot 2^{n-|A|}=2^n$ do not relate to any summation and $\prod_{i\not\in A}\delta(a_i,b_i)$ is only depend on the choice of $A$, 
\begin{align*}
T_\mathbf{b}=2^n\cdot \sum_{Z_\mathbf{a}\subset A\subset E_{\mathbf{a}}\cap Z_\mathbf{b}} (-1)^{|A|}\prod_{i\in A}F_i(1)\prod_{i\not\in A}\delta(a_i,b_i)\bigg\{\sum_{\mathbf{k}\in K_A}\prod_{i\not\in A} c(y^{k_i},F_i)\prod_{i\not\in A}(t^{k_i})^{b_i}\bigg\}
\end{align*}

\noindent\textbf{Step 3}
Let $\{0,\ldots,n\}\backslash A=\{i_1,i_2,\ldots,i_p\}$ where $p=n+1-|A|$. Then
\begin{align*}
&\sum_{\mathbf{k}\in K_A}\prod_{i\not\in A} c(y^{k_i},F_i)\prod_{i\not\in A}(t^{k_i})^{b_i} \\
&=\sum_{0\leq k_{i_1}\leq m_{i_1}}\sum_{0\leq k_{i_2}\leq m_{i_2}}\cdots\sum_{0\leq k_{i_p}\leq m_{i_p}}\prod_{j=1}^p c(y^{k_{i_j}},F_{i_j})(t^{k_{i_j}})^{b_{i_j}}\\
&=\sum_{0\leq k_{i_1}\leq m_{i_1}}c(y^{k_{i_1}},F_{i_1})(t^{k_{i_1}})^{b_{i_1}}\cdots\sum_{0\leq k_{i_p}\leq m_{i_p}}c(y^{k_{i_p}},F_{i_p})(t^{k_{i_p}})^{b_{i_p}}\\\
&=\prod_{j=1}^p\sum_{0\leq k_{i_j}\leq m_{i_j}} c(y^{k_{i_j}},F_{i_j})(t^{k_{i_j}})^{b_{i_j}}=\prod_{i\not\in A}\sum_{0\leq k_i\leq m_i}c(y^{k_i},F_i)(t^{k_i})^{b_i}\\
&=\prod_{i\not\in A}\sum_{0\leq k_i\leq m_i}c(y^{k_i},F_i)(t^{b_i})^{k_i}
\end{align*}
Since each $F_i$ has degree $m_i$ for $i\not\in A$, $\sum_{0\leq k_i\leq m_i}c(y^{k_i},F_i)(t^{b_i})^{k_i}=F_i(t^{b_i})$. Hence 
$$T_\mathbf{b}=2^n\cdot \sum_{Z_\mathbf{a}\subset A\subset E_{\mathbf{a}}\cap Z_\mathbf{b}} (-1)^{|A|}\prod_{i\in A}F_i(1)\prod_{i\not\in A} F_i(t^{b_i}) \prod_{i\not\in A} \delta(a_i,b_i)$$

\noindent\textbf{Step 4}

Since $F_i(t^{b_i})=F_i(1)$ for $i\in E_\mathbf{a} \cap Z_\mathbf{b}$, 
$$\prod_{i\in A}F_i(1)\prod_{i\not\in A} F_i(t^{b_i})=\prod_{i\in E_\mathbf{a}\cap Z_\mathbf{b}}F_i(1)\prod_{i\not\in E_\mathbf{a}\cap Z_\mathbf{b}} F_i(t^{b_i}).$$
Also, since $a_i$ and $b_i$ are all even for $i\in E_\mathbf{a} \cap Z_\mathbf{b}$, $\prod_{i\not\in A}\delta(a_i,b_i)=\prod_{i\not\in E_\mathbf{a}\cap Z_\mathbf{b}} \delta(a_i,b_i)$. So the only remaining term which is depended on the choice of $A$ is $(-1)^{|A|}$. 

$$T_\mathbf{b}=2^n\cdot \prod_{i\in E_\mathbf{a}\cap Z_\mathbf{b}}F_i(1)\prod_{i\not\in E_\mathbf{a}\cap Z_\mathbf{b}} F_i(t^{b_i})\prod_{i\not\in E_\mathbf{a}\cap Z_\mathbf{b}} \delta(a_i,b_i) \sum_{Z_\mathbf{a}\subset A\subset E_\mathbf{a}\cap Z_\mathbf{b}} (-1)^{|A|}$$

\noindent\textbf{Step 5}

If $Z_\mathbf{a}\neq E_\mathbf{a}\cap Z_\mathbf{b}$, then $\sum_{Z_\mathbf{a}\subset A\subset E_\mathbf{a}\cap Z_\mathbf{b}} (-1)^{|A|}=(-1)^{|Z_\mathbf{a}|}(1-1)^{|E_\mathbf{a}\cap Z_\mathbf{b}\backslash Z_\mathbf{a}|}=0$. It means that $T_\mathbf{b}=0$ when $Z_\mathbf{a}\neq E_\mathbf{a}\cap Z_\mathbf{b}$. 

We can assume that $Z_\mathbf{a}=E_\mathbf{a}\cap Z_\mathbf{b}$. Since $\sum_{Z_\mathbf{a}\subset A\subset E_\mathbf{a}\cap Z_\mathbf{b}} (-1)^{|A|}=(-1)^{|Z_\mathbf{a}|}$,
$$T_\mathbf{b}=(-1)^{|Z_\mathbf{a}|}\cdot 2^n\cdot \prod_{i\not\in Z_\mathbf{a}} F_i(t^{b_i})\prod_{i\not\in Z_\mathbf{a}}\delta(a_i,b_i)$$

For some $i\not\in Z_\mathbf{a}$, if $a_i$ and $b_i$ do not have same parity, $\delta(a_i,b_i)=0$ and it implies that $T_\mathbf{b}=0$.

Now we can assume that $a_i\equiv b_i~(\mbox{mod} ~2)$ for all $i\not\in Z_\mathbf{a}$. Suppose that there exist an index $i$ such that $a_i\neq0$ and $b_i=0$. Since $a_i$ and $b_i$ have the same parity, $a_i$ is even. It means that $i\in E_\mathbf{a}$ and $i\in Z_\mathbf{b}$. But $i\in E_\mathbf{a}\cap Z_\mathbf{b}=Z_\mathbf{a}$ contradicts to the supposition. It means that $Z_\mathbf{a}=Z_\mathbf{b}$.

 If $a_i=1$, then $b_i\neq 0$ and so $b_i\geq a_i$. If $a_i=2$, then $b_i\neq 0$ and $b_i\equiv 0~(\mbox{mod~} 2)$. So $b_i\geq 2=a_i$. Suppose that $a_i>2$ and $b_i<a_i$ for some $i\not\in Z_\mathbf{a}$. Then $b_i=a_i-2j$ for some $1\leq j\leq m_i=\lfloor \frac{a_i-1}{2}\rfloor$ since  $a_i$ and $b_i$ have same parity and $0<b_i<a_i$. It implies that $F_i(t^{b_i})=\prod_{j=1}^{m_i}(t^{b_i}-t^{a_i-2j})=0$ for some $i\not\in Z_\mathbf{a}$. Hence
$$T_\mathbf{b}=(-1)^{|Z_\mathbf{a}|}\cdot 2^n\cdot \prod_{i\not\in Z_\mathbf{a}} F_i(t^{b_i})=0$$

So nonzero $T_\mathbf{b}$ occurs only when $b_i\geq a_i$ for all $i\not\in Z_\mathbf{a}$. Since the total degree $\sum_{i=0}^n b_i=\sum_{i=0}^n a_i=d$ are same, the only nonzero $T_\mathbf{b}$ occurs when $b_i=a_i$ for all $i$. 

In conclusion, 

$$D_\mathbf{a}=\binom{d}{\mathbf{a}}\cdot T_\mathbf{a}=\binom{d}{\mathbf{a}}\cdot (-1)^{|Z_\mathbf{a}|}\cdot 2^n \cdot \prod_{i\not\in Z_\mathbf{a} }F_i(t^{a_i})$$
\end{proof}

How many do linear forms appear in the decomposition using (\ref{eqeq0})? If $E_\mathbf{a}\neq\{0,\ldots,n\}$, the number of linear forms in Theorem \ref{coeff} is given by 
\begin{align*}
&\sum_{Z_\mathbf{a}\subset A\subset E_\mathbf{a}}|K_A||\overline{S_A}|=\sum_{Z_\mathbf{a}\subset A\subset E_\mathbf{a}}2^{n-|A|}\prod_{i\not\in A} (m_i+1)=\frac{1}{2}\sum_{Z_\mathbf{a}\subset A\subset E_\mathbf{a}}\prod_{i\not\in A} 2(m_i+1)\\
&=\frac{1}{2}\sum_{Z_\mathbf{a}\subset A\subset E_\mathbf{a}}\prod_{i\not\in E_\mathbf{a}} (a_i+1)\prod_{i\not\in A,i\in E_\mathbf{a}}(a_i)=\frac{1}{2}\prod_{i\not\in E_\mathbf{a}} (a_i+1)\sum_{Z_\mathbf{a}\subset A\subset E_\mathbf{a}}\prod_{i\not\in A,i\in E_\mathbf{a}}(a_i)\\
&=\frac{1}{2}\prod_{i\not\in E_\mathbf{a}}(a_i+1)\prod_{i\in E_\mathbf{a}\backslash Z_\mathbf{a}} (a_i+1)=\frac{1}{2}\prod_{i\not\in Z_\mathbf{a}}(a_i+1).
\end{align*}
In case of $E_\mathbf{a}=\{0,\ldots,n\}$ (i.e. all the elements $a_i$ are even), the choice $A=\{0,\ldots,n\}$ should not be counted since there is no related linear form. Hence the toal number of linear forms can be computed as $$\frac{1}{2}\sum_{Z_\mathbf{a}\subset A\not\subset \{0,\ldots,n\}}\prod_{i\not\in A} (a_i)=\frac{1}{2}(\prod_{i\not\in Z_\mathbf{a}}(a_i+1)-1)\cdots(\ast).$$ 

Since, in the argument above we do not consider the parallel shift of the entries in the set $K_A$ due to non-zero scaling of corresponding linear forms, the number $(\ast)$ is larger than the upper bound given in the paper \cite{HM}.
To remedy this, instead of $K_A$ we need to choose $k_i$'s from $$\overline{K_A}=\{(k_i)_{i\not\in A}~|~k_i\in \mathbb{Z}, 0\leq k_i\leq m_i \mathrm{~for~}  i\not\in A, \min\{k_i\}_{i\not\in A}=0\}~,$$
which is a set of representatives of each class under the shift. Then, based on Theorem \ref{coeff}, we can obtain a more optimized (i.e number of linear forms reduced as much as possible) result as follows.

\begin{corollary}\label{reduced_decomp}
Let $\mathbf{a}\in\mathbb{Z}_{\geq 0}^{n+1}$ be a sequence of $n+1$ nonnegative integers with $|\mathbf{a}|=d$. 
Then, we have
\begin{equation}\label{eqeq}
D_\mathbf{a}\cdot X_0^{a_0}X_1^{a_1}\cdots X_n^{a_n}=\sum_{Z_\mathbf{a}\subset A\subset E_\mathbf{a}}\sum_{\mathbf{k}\in \overline{K_A}}\sum_{\mathbf{s}\in \overline{S_A}}\overline{C_{A,\mathbf{k},\mathbf{s}}}(\ell_{A,\mathbf{k},\mathbf{s}})^d\end{equation}
where
$$D_\mathbf{a}=(-1)^{|Z_\mathbf{a}|}\cdot 2^n \cdot \binom{d}{\mathbf{a}}\cdot\prod_{i\not\in Z_\mathbf{a}} F_i(t^{a_i})$$ and
$$\overline{C_{A,\mathbf{k},\mathbf{s}}}=\sum_{j=0}^{\min\{m_i-k_i\}_{i\not\in A}} t^{d\cdot j}C_{A,\mathbf{k}+j\cdot\mathbf{1},\mathbf{s}}~.$$

\end{corollary}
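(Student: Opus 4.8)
The plan is to derive Corollary~\ref{reduced_decomp} from Theorem~\ref{coeff} by grouping, inside each fixed $A$, the summands of the right-hand side of \eqref{eqeq0} according to the equivalence relation on $K_A$ generated by the diagonal shift $\mathbf{k}\mapsto \mathbf{k}+\mathbf{1}$. The key observation is that scaling the linear form $\ell_{A,\mathbf{k},\mathbf{s}}$ by $t$ sends it to $\ell_{A,\mathbf{k}+\mathbf{1},\mathbf{s}}$ (since $\ell_{A,\mathbf{k},\mathbf{s}}=\sum_{i\notin A}(-1)^{s_i}t^{k_i}X_i$, so $t\cdot\ell_{A,\mathbf{k},\mathbf{s}}=\sum_{i\notin A}(-1)^{s_i}t^{k_i+1}X_i=\ell_{A,\mathbf{k}+\mathbf{1},\mathbf{s}}$), hence $(\ell_{A,\mathbf{k}+\mathbf{1},\mathbf{s}})^d = t^d(\ell_{A,\mathbf{k},\mathbf{s}})^d$. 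Thus all the $d$-th powers in one shift-orbit are scalar multiples of a single one, and collecting their coefficients replaces $C_{A,\mathbf{k},\mathbf{s}}$ by a single weighted sum.

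First I would fix $A$ with $Z_\mathbf{a}\subset A\subset E_\mathbf{a}$ and fix $\mathbf{s}\in\overline{S_A}$, and partition $K_A$ into shift-orbits. Each orbit has a unique representative $\mathbf{k}\in\overline{K_A}$ (the one with $\min\{k_i\}_{i\notin A}=0$), and the orbit of $\mathbf{k}$ is exactly $\{\mathbf{k}+j\cdot\mathbf{1}\ :\ 0\le j\le \min\{m_i-k_i\}_{i\notin A}\}$, since we need $0\le k_i+j\le m_i$ for every $i\notin A$. Then I would compute, using $(\ell_{A,\mathbf{k}+j\cdot\mathbf{1},\mathbf{s}})^d=t^{dj}(\ell_{A,\mathbf{k},\mathbf{s}})^d$,
\begin{equation*}
\sum_{j=0}^{\min\{m_i-k_i\}_{i\notin A}} C_{A,\mathbf{k}+j\cdot\mathbf{1},\mathbf{s}}\,(\ell_{A,\mathbf{k}+j\cdot\mathbf{1},\mathbf{s}})^d
=\Bigg(\sum_{j=0}^{\min\{m_i-k_i\}_{i\notin A}} t^{dj}\,C_{A,\mathbf{k}+j\cdot\mathbf{1},\mathbf{s}}\Bigg)(\ell_{A,\mathbf{k},\mathbf{s}})^d,
\end{equation*}
which is precisely $\overline{C_{A,\mathbf{k},\mathbf{s}}}\,(\ell_{A,\mathbf{k},\mathbf{s}})^d$ by the definition of $\overline{C_{A,\mathbf{k},\mathbf{s}}}$. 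Summing over all orbit representatives $\mathbf{k}\in\overline{K_A}$, over $\mathbf{s}\in\overline{S_A}$, and over $A$ then reassembles exactly the right-hand side of \eqref{eqeq0}, giving \eqref{eqeq}. Note that $D_\mathbf{a}$ is unchanged because the left-hand side of \eqref{eqeq0} is untouched.

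The only genuinely non-routine point is the bookkeeping that $\overline{K_A}$ really is a complete and irredundant set of orbit representatives and that the range of $j$ in $\overline{C_{A,\mathbf{k},\mathbf{s}}}$ exactly traces out the orbit — i.e. that no summand of \eqref{eqeq0} is counted twice or dropped. This is a short combinatorial check: given any $\mathbf{k}'\in K_A$, subtract $\min\{k'_i\}_{i\notin A}$ from all coordinates to land on the unique representative in $\overline{K_A}$, and the amount subtracted is the $j$ recovered by the formula. Everything else is the substitution $t\cdot\ell_{A,\mathbf{k},\mathbf{s}}=\ell_{A,\mathbf{k}+\mathbf{1},\mathbf{s}}$ together with a regrouping of a finite sum, so there is no analytic or algebraic obstacle beyond this indexing argument.
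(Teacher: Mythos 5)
Your proposal is correct and follows essentially the same route as the paper's own proof: the paper likewise rewrites each $\ell_{A,\mathbf{g},\mathbf{s}}$ with $\mathbf{g}\in K_A$ as $t^{m}\ell_{A,\mathbf{h},\mathbf{s}}$ for the unique shift-representative $\mathbf{h}=\mathbf{g}-\min\{g_i\}\cdot\mathbf{1}\in\overline{K_A}$, and then collects the coefficients over $0\le m\le\min\{m_i-h_i\}_{i\notin A}$ into $\overline{C_{A,\mathbf{h},\mathbf{s}}}$. Your orbit-counting check is exactly the reindexing the paper carries out in its displayed summation (\ref{summation}).
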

\begin{proof}
By Theorem \ref{coeff}, 

$$D_\mathbf{a}\cdot X_0^{a_0}X_1^{a_1}\cdots X_n^{a_n}=\sum_{Z_\mathbf{a}\subset A\subset E_\mathbf{a}}\sum_{\mathbf{g}\in K_A}\sum_{\mathbf{s}\in \overline{{S}_A}}C_{A,\mathbf{g},\mathbf{s}}(\ell_{A,\mathbf{g},\mathbf{s}})^d.$$
For a set $(A,\mathbf{g},\mathbf{s})$ with $Z_\mathbf{a}\subset A\subset E_\mathbf{a}, \mathbf{g}\in K_A, \mathbf{s}\in \overline{{S}_A}$, let $m=\min\{g_i\}_{i\not\in A}$. Then
$$\ell_{A,\mathbf{g},\mathbf{s}}=\sum_{i\not\in A}(-1)^{s_i}t^{g_i}X_i=t^m\big(\sum_{i\not\in A}(-1)^{s_i}t^{g_i-m}X_i\big)=t^m\ell_{A,\mathbf{h},\mathbf{s}}$$ where $\mathbf{h}=(g_i-m)_{i\not\in A}.$ Since $\min\{h_i\}_{i\not\in A}=\min\{g_i\}-\min\{g_i\}=0$, $\mathbf{h}\in \overline{K_A}$. 
Hence
\begin{align*}
&\sum_{Z_\mathbf{a}\subset A\subset E_\mathbf{a}}\sum_{\mathbf{g}\in K_A}\sum_{\mathbf{s}\in \overline{{S}_A}}C_{A,\mathbf{g},\mathbf{s}}(\ell_{A,\mathbf{g},\mathbf{s}})^d\\
&=\sum_{Z_\mathbf{a}\subset A\subset E_\mathbf{a}}\sum_{\mathbf{h}\in \overline{K_A}}\sum_{\mathbf{s}\in \overline{{S}_A}}\sum_{\substack{\mathbf{g}\in K_A,\\\mathbf{g}-\min\{g_i\}\cdot\mathbf{1}=\mathbf{h}}}C_{A,\mathbf{g},\mathbf{s}}(t^{\min\{g_i\}}\cdot\ell_{A,\mathbf{h},\mathbf{s}})^d,
\end{align*}
and
\begin{equation}\label{summation}\sum_{\substack{\mathbf{g}\in K_A,\\\mathbf{g}-\min\{g_i\}\cdot\mathbf{1}=\mathbf{h}}}C_{A,\mathbf{g},\mathbf{s}}\cdot t^{\min\{g_i\}\cdot d}=\sum_{\mathbf{h}+m\cdot\mathbf{1}\in K_A}C_{A,\mathbf{h}+m\cdot\mathbf{1},\mathbf{s}}\cdot t^{m\cdot d}~.
\end{equation}
Since $\mathbf{h}+m\cdot\mathbf{1}\in K_A$ if and only if $0\leq h_i+m\leq m_i$ for $i\not\in A$, $$0=\max\{-h_i\}_{i\not\in A}\leq m\leq \min\{m_i-h_i\}_{i\not\in A}~.$$
Now the summation (\ref{summation}) becomes
$$\sum_{m=0}^{\min\{m_i-h_i\}_{i\not\in A}} C_{A,\mathbf{h}+m\cdot\mathbf{1},\mathbf{s}}\cdot t^{m\cdot d}=:\overline{C_{A,\mathbf{h},\mathbf{s}}}~.$$
\end{proof}

\begin{remark}[Waring rank and decomposition of a monomial over any infinite field $\kk$]\label{kk-rank} We would like to remark that the Waring type identity in Corollary \ref{reduced_decomp} gives an upper bound for the Waring rank as  
\begin{equation}\label{Our_bd_rank_over_kk}
\rank_\kk(M)\leq \frac{1}{2}(\prod_{i=0}^n(a_i+1)-\prod_{i=0}^n(a_i-1))~.
\end{equation}
for any monomial $M=X_0^{a_0}X_1^{a_1}\ldots X_n^{a_n}$ with $a_0\ge a_1\ge \cdots \ge a_n>0$ over an infinite field $\kk$ in more direct way than in \cite{HM}; i.e. just by counting linear forms in the given explicit decomposition. It's because the total number of linear forms appearing in (\ref{eqeq}) is counted by
\begin{align*}
&\sum_{Z_\mathbf{a}\subset A\subset E_\mathbf{a}}|\overline{K_A}||\overline{S_A}|=\sum_{Z_\mathbf{a}\subset A\subset E_\mathbf{a}} \bigg(\prod_{i\not\in A} (m_i+1)-\prod_{i\not\in A}(m_i)\bigg)\cdot 2^{n-|A|}\\
&=\frac{1}{2}\sum_{Z_\mathbf{a}\subset A\subset E_\mathbf{a}}\bigg(\prod_{i\not\in A} 2(m_i+1)-\prod_{i\not\in A}2(m_i)\bigg)\\
&=\frac{1}{2}\sum_{Z_\mathbf{a}\subset A\subset E_\mathbf{a}}\bigg(\prod_{i\not\in E_\mathbf{a}}(a_i+1)\prod_{i\in E_\mathbf{a}\backslash A}(a_i)-\prod_{i\not\in E_\mathbf{a}} (a_i-1)\prod_{i\in E_\mathbf{a}\backslash A} (a_i-2)\bigg)\\
&=\frac{1}{2}\prod_{i\not\in E_\mathbf{a}}(a_i+1)\sum_{Z_\mathbf{a}\subset A\subset E_\mathbf{a}}\prod_{i\in E_\mathbf{a}\backslash A}(a_i)-\frac{1}{2}\prod_{i\not\in E_\mathbf{a}}(a_i-1)\sum_{Z_\mathbf{a}\subset A\subset E_\mathbf{a}}\prod_{i\in E_\mathbf{a}\backslash A}(a_i-2)\\
&=\frac{1}{2}\prod_{i\not\in E_\mathbf{a}}(a_i+1)\prod_{i\in E_\mathbf{a}\backslash Z_\mathbf{a}}(a_i+1)-\frac{1}{2}\prod_{i\not\in E_\mathbf{a}}(a_i-1)\prod_{i\in E_\mathbf{a}\backslash Z_\mathbf{a}}(a_i-1)\\
&=\frac{1}{2}\bigg(\prod_{i\not\in Z_\mathbf{a}}(a_i+1)-\prod_{i\not\in Z_\mathbf{a}}(a_i-1)\bigg)~,
\end{align*}
which is the same number as in the upper bound (\ref{Our_bd_rank_over_kk}).


\end{remark}

\begin{remark}[Linear forms $\ell_{A,\mathbf{k},\mathbf{s}}$'s via Apolarity]
The choice for our linear forms $\ell_{A,\mathbf{k},\mathbf{s}}$'s in this paper is originated from the apolarity using the ideal $J_\mathbf{a}(t)\subset(x_0^{a_0+1},\ldots,x_n^{a_n+1})$, which is first introduced in \cite{HM}. Each point of the zero set $V(J_\mathbf{a}(t))$ in the projective $n$-space over $\kk$ decides exactly a linear form $\ell_{A,\mathbf{k},\mathbf{s}}$ in the present paper up to non-zero scaling.

\end{remark}

\begin{example}[Case of $X_0^4X_1^3X_2^2$]\label{432case}
Let $\mathbf{a}=\{4,3,2\}$. Then $Z_\mathbf{a}=\emptyset$, $E_\mathbf{a}=\{0,2\}$ and $m_0=1,m_1=1,m_2=0$. There are four cases $A\subset\{0,2\}$ where $A=\emptyset, \{0\},\{2\},\{0,2\}$ and for each $i$ we have $$F_0=(y-t^2),\quad F_1=(y-t),\quad F_2=1~.$$
\begin{enumerate}
\item $A=\emptyset$. Then, we get
\begin{align*}
K_A=\overline{K_A}&=\{(k_0,k_1,k_2)~|~0\leq k_0\leq 1,0\leq k_1\leq 1, 0\leq k_2\leq 0\}\\
&=\{(0,0,0),(0,1,0),(0,0,1),(0,1,1)\} \\
\overline{S_A}&=\{(0,0,0),(0,0,1),(0,1,0),(0,1,1)\}.
\end{align*}
The linear forms and coefficients indexed by $(\mathbf{k}, \mathbf{s})\in K_A\times \overline{S_A}$ is given by the following tables

\begin{center}
$\ell_{\emptyset,\mathbf{k},\mathbf{s}}= \begin{tabular}{|c|c|c|c|c|}\hline
& $\mathbf{s}=000$ & $001$ & $010$ & $011$\\ \hline
$\mathbf{k}=000$ &$X_0+X_1+X_2 $ &$X_0+X_1-X_2 $ &$X_0-X_1+X_2 $& $X_0-X_1-X_2 $ \\ \hline
$010$ & $X_0+tX_1+X_2 $& $X_0+tX_1-X_2 $&$X_0-tX_1+X_2 $& $X_0-tX_1-X_2 $\\ \hline
$001$ & $X_0+X_1+tX_2 $&$X_0+X_1-tX_2 $ &$X_0-X_1+tX_2 $ & $X_0-X_1-tX_2 $\\ \hline
$011$ &$X_0+tX_1+tX_2 $ & $X_0+tX_1-tX_2 $&$X_0-tX_1+tX_2 $ & $X_0-tX_1-tX_2 $\\\hline
\end{tabular}$
\end{center}

\begin{center}
$C_{\emptyset,\mathbf{k},\mathbf{s}}=(-1)^0\cdot 2^0\cdot 1\cdot$ \begin{tabular}{|c|c|c|c|c|}\hline
& $\mathbf{s}=000$ & $001$ & $010$ & $011$\\ \hline
$\mathbf{k}=000$ &$t^3$ &$t^3$ &$-t^3$& $-t^3$ \\ \hline
$010$ & $-t^2$& $-t^2$&$t^2$& $t^2$\\ \hline
$001$ & $-t$&$-t$ &$t$ & $t$\\ \hline
$011$ &$1$ & $1$&$-1$ & $-1$\\\hline
\end{tabular}
\end{center}

\item $A=\{0\}$
Then
\begin{align*}
K_A=\overline{K_A}&=\{(k_1,k_2)~|~0\leq k_1\leq 1, 0\leq k_2\leq 0\}\\
&=\{(0,0),(1,0)\} \\
\overline{S_A}&=\{(0,0),(0,1)\}.
\end{align*}
The linear forms and coefficients defined by $(\mathbf{k}, \mathbf{s})\in K_A\times \overline{S_A}$ is given by the following tables

\begin{center}
$\ell_{\{0\},\mathbf{k},\mathbf{s}}=$ 
\begin{tabular}{|c|c|c|}\hline
-& $\mathbf{s}=00$ & $01$\\ \hline
$\mathbf{k}=00$ & $X_1+X_2$& $X_1-X_2$\\ \hline
$10$ & $tX_1+X_2$&$tX_1-X_2$ \\ \hline
\end{tabular}
\end{center}

\begin{center}
$C_{\{0\},\mathbf{k},\mathbf{s}}=(-1)^1\cdot 2^1\cdot (1-t^2)\cdot$ 
\begin{tabular}{|c|c|c|}\hline
- & $\mathbf{s}=00$ & $01$\\ \hline
$\mathbf{k}=00$ &$-t$ & $-t$\\ \hline
$10$ & $1$ & $1$\\ \hline
\end{tabular}
\end{center}

\item $A=\{2\}$
Then
\begin{align*}
K_A&=\{(k_0,k_1)~|~0\leq k_0\leq 1, 0\leq k_1\leq 1\}\\
&=\{(0,0),(0,1),(1,0),(1,1)\} \\
\overline{K_A}&=\{(k_0,k_1)~|~0\leq k_0\leq 1, 0\leq k_1\leq 1, \min\{k_0,k_1\}=0\}\\
&=\{(0,0),(0,1),(1,0)\}\\
\overline{S_A}&=\{(0,0),(0,1)\}.
\end{align*}
The linear forms and coefficients defined by $(\mathbf{k}, \mathbf{s})\in K_A\times \overline{S_A}$ is given by the following tables

\begin{center}
$\ell_{\{2\},\mathbf{k},\mathbf{s}}=$ 
\begin{tabular}{|c|c|c|}\hline
-& $\mathbf{s}=00$ & $01$\\ \hline
$\mathbf{k}=00$ & $X_0+X_1$& $X_0-X_1$\\ \hline
$01$ & $X_0+tX_1$&$X_0-tX_1$ \\ \hline
$10$ & $tX_0+X_1$ & $tX_0-X_1$\\ \hline
$11$ & $tX_0+tX_1$ & $tX_0-tX_1$\\ \hline
\end{tabular}
\end{center}

\begin{center}
$C_{\{2\},\mathbf{k},\mathbf{s}}=(-1)^1\cdot 2^1\cdot 1\cdot$ 
\begin{tabular}{|c|c|c|}\hline
- & $\mathbf{s}=00$ & $01$\\ \hline
$\mathbf{k}=00$ &$t^3$ & $-t^3$\\ \hline
$01$ & $-t^2$ & $t^2$\\ \hline
$10$ & $-t$ & $t$\\ \hline
$11$ & $1$ & $-1$\\ \hline
\end{tabular}
\end{center}
Since $tX_0\pm tX_1$ and $X_0\pm X_1$ represent the same linear form, the proper coefficient $\overline{C_{\{2\},(0,0),\mathbf{s}}}$ of $X_0\pm X_1$ is $\mp 2(t^3+ t^9)$.

\item $A=\{0,2\}$
Then
\begin{align*}
K_A&=\{(k_1)~|~0\leq k_1\leq 1\}\\
&=\{(0),(1)\} \\
\overline{K_A}&=\{(k_1)~|~0\leq k_1\leq 1, \min\{k_1\}=0\}\\
&=\{(0)\}\\
\overline{S_A}&=\{(0)\}.
\end{align*}
The linear forms and coefficients defined by $(\mathbf{k}, \mathbf{s})\in K_A\times \overline{S_A}$ is given by the following tables
\begin{center}
$\ell_{\{0,2\},\mathbf{k},\mathbf{s}}=$ 
\begin{tabular}{|c|c|}\hline
-& $\mathbf{s}=0$\\ \hline
$\mathbf{k}=0$ & $X_1$\\ \hline
$1$ & $tX_1$\\ \hline
\end{tabular}
\end{center}

\begin{center}
$C_{\{0,2\},\mathbf{k},\mathbf{s}}=(-1)^2\cdot 2^2\cdot (1-t^2)\cdot 1\cdot$ 
\begin{tabular}{|c|c|}\hline
- & $\mathbf{s}=0$\\ \hline
$\mathbf{k}=0$ &$-t$\\ \hline
$1$ & $1$ \\ \hline
\end{tabular}
\end{center}
Since the linear forms $X_1$ and $tX_1$ represent the same linear form, the proper coefficient $\overline{C_{\{0,2\},(0),(0)}}$ of linear form $X_1$ is $4(1-t^2)(-t+t^9)=-4t^{11}+4t^9+4t^3-4t$. 

\end{enumerate}
Finally, we compute 
\begin{equation*}
D_{\{4,3,2\}}=2^2\cdot \binom{9}{4,3,2}\cdot (t^4-t^2)(t^3-t)=5040t^3(t^2-1)^2~.
\end{equation*}
So, the identity (\ref{eqeq}) leads to 
\begin{align*}
&5040t^3(t^2-1)^2X_0^4X_1^3X_2^2=t^3(X_0+X_1+X_2)^9+t^3(X_0+X_1-X_2)^9-t^3(X_0-X_1+X_2)^9\\
&-t^3(X_0-X_1-X_2)^9-t^2(X_0+tX_1+X_2)^9-t^2(X_0+tX_1-X_2)^9+t^2(X_0-tX_1+X_2)^9\\
&+t^2(X_0-tX_1-X_2)^9-t(X_0+X_1+tX_2)^9-t(X_0+X_1-tX_2)^9+t(X_0-X_1+tX_2)^9\\
&+t(X_0-X_1-tX_2)^9+(X_0+tX_1+tX_2)^9+(X_0+tX_1-tX_2)^9-(X_0-tX_1+tX_2)^9\\
&-(X_0-tX_1-tX_2)^9-2(1-t^2)(-t)(X_1+X_2)^9-2(1-t^2)(-t)(X_1-X_2)^9\\
&-2(1-t^2)(tX_1+X_2)^9-2(1-t^2)(tX_1-X_2)^9-2(t^3+t^9)(X_0+X_1)^2+2(t^3+t^9)(X_0-X_1)^9\\
&-2(-t^2)(X_0+tX_1)^9-2t^2(X_0-tX_1)^9-2(-t)(tX_0+X_1)^9-2t(tX_0-X_1)^9\\
&+4(1-t^2)(-t+t^9)X_1^9~
\end{align*}
and as dividing by $D_{\{4,3,2\}}=5040t^3(t^2-1)^2$ we eventually have
{\small\begin{align*}
&X_0^4X_1^3X_2^2=\frac{1}{5040}\bigg[\frac{1}{(t^2-1)^2}(X_0+X_1+X_2)^9+\frac{1}{(t^2-1)^2}(X_0+X_1-X_2)^9-\frac{1}{(t^2-1)^2}(X_0-X_1+X_2)^9\\
&-\frac{1}{(t^2-1)^2}(X_0-X_1-X_2)^9-\frac{1}{t(t^2-1)^2}(X_0+tX_1+X_2)^9-\frac{1}{t(t^2-1)^2}(X_0+tX_1-X_2)^9\\
&+\frac{1}{t(t^2-1)^2}(X_0-tX_1+X_2)^9+\frac{1}{t(t^2-1)^2}(X_0-tX_1-X_2)^9-\frac{1}{t^2(t^2-1)^2}(X_0+X_1+tX_2)^9\\
&-\frac{1}{t^2(t^2-1)^2}(X_0+X_1-tX_2)^9+\frac{1}{t^2(t^2-1)^2}(X_0-X_1+tX_2)^9+\frac{1}{t^2(t^2-1)^2}(X_0-X_1-tX_2)^9\\
&+\frac{1}{t^3(t^2-1)^2}(X_0+tX_1+tX_2)^9+\frac{1}{t^3(t^2-1)^2}(X_0+tX_1-tX_2)^9-\frac{1}{t^3(t^2-1)^2}(X_0-tX_1+tX_2)^9\\
&-\frac{1}{t^3(t^2-1)^2}(X_0-tX_1-tX_2)^9-\frac{2}{t^2(t^2-1)}(X_1+X_2)^9-\frac{2}{t^2(t^2-1)}(X_1-X_2)^9\\
&+\frac{2}{t^3(t^2-1)}(tX_1+X_2)^9+\frac{2}{t^3(t^2-1)}(tX_1-X_2)^9-\frac{2(t^6+1)}{(t^2-1)^2}(X_0+X_1)^2+\frac{2(t^6+1)}{(t^2-1)^2}(X_0-X_1)^9\\
&+\frac{2}{t(t^2-1)^2}(X_0+tX_1)^9-\frac{2}{t(t^2-1)^2}(X_0-tX_1)^9+\frac{2}{t^2(t^2-1)^2}(tX_0+X_1)^9-\frac{2}{t^2(t^2-1)^2}(tX_0-X_1)^9\\
&-\frac{4(t^2+1)(t^4+1)}{t^2}X_1^9\bigg]~,
\end{align*}}
which gives us a $1$-dimensional family of a Waring decomposition of $X_0^4X_1^3X_2^2$ (i.e. for all $t\in \kk$ with $5040 t^3(t^2-1)^2\neq0$) of $\frac{1}{2}(5\cdot4\cdot3-3\cdot2\cdot1)=27$ summands.
\end{example}

\section{Case of arbitrary homogeneous form}\label{sect_conseq}

In this section, we consider some consequence of the results in Section \ref{sect_main}. Since a homogeneous form in $\kk[X_0,\ldots,X_n]$ can be written as a $\kk$-linear combination of monomials of the same degree, by our previous decomposition for a monomial, naturally we have a family of  \textit{explicit} Waring decompositions of any homogeneous polynomial with $\kk$-coefficients. Finding such a sum of powers of linear forms representation of a given degree form is quite important in many areas of mathematics. For instance, when $\kk=\mathbb{Q}$, it is closely related to the problem of integrating a polynomial function over a rational simplex, which is fundamental for applications such as discrete optimization, finite element methods in numerical analysis, and algebraic statistics computation (see e.g. \cite[section 1]{BBDKV11} and references therein). For computational complexity of this integration problem, Waring decomposition can be used to obtain a polynomial time algorithm for evaluating integrals of polynomials of some fixed constraint (see \cite[3.3, 3.4]{BBDKV11}). Let's briefly review some aspect of their result. 

Let $\Delta$ be a $k$-dimensional rational simplex inside $\mathbb{R}^n$ and let $f\in\mathbb{Q}[X_0,\ldots,X_n]$ be a homogeneous polynomial with rational coefficients. To compute $\int_\Delta f\mathrm{d}m$, where $\mathrm{d}m$ is the integral Lebesgue measure on the affine hull $\langle \Delta\rangle$ of the simplex (see \cite[2.1]{BBDKV11} for the precise definition), we recall a useful formula due to M. Brion as follows.

\begin{proposition}(Brion)
Let $\Delta$ be the simplex that is the convex hull of $(k+1)$ affinely independent vertices $\mathbf{s}_1,\mathbf{s}_2,\ldots,\mathbf{s}_{k+1}$ in $\mathbb{R}^n$. Let $\ell$ be a linear form which is regular w.r.t. $\Delta$, i.e., $\langle \ell,\mathbf{s}_i\rangle\neq \langle \ell,\mathbf{s}_j\rangle$ for any pair $i\neq j$. Then we have the following relation
\begin{equation}\label{Brion_formula}
\int_\Delta \ell^D \mathrm{d}m = k!~ \mathrm{vol}(\Delta, \mathrm{d}m)\frac{D!}{(D+k)!}\left(\sum_{i=1}^{k+1} \frac{\langle \ell,\mathbf{s}_i\rangle^{D+k}}{\prod_{j\neq i}\langle \ell,\mathbf{s}_i-\mathbf{s}_j\rangle} \right).
\end{equation}
\end{proposition}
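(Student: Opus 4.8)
The statement to prove is Brion's formula \eqref{Brion_formula}, an integral identity for the power $\ell^D$ of a regular linear form over a simplex. The plan is to reduce this to a one-dimensional computation by exploiting the special structure of a simplex together with a suitable change of coordinates, and then to recognize the resulting expression as a divided difference / partial-fraction sum, which is exactly the right-hand side.

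First I would normalize the situation. Set $N = D + k$, $V = k!\,\mathrm{vol}(\Delta,\mathrm dm)$ (the ``lattice volume'' normalization so that the standard simplex has volume $1$), and use the affine parametrization $\varphi: \Delta_k \to \langle\Delta\rangle$ sending the standard $k$-simplex $\Delta_k = \{(u_1,\dots,u_k) : u_i \ge 0,\ \sum u_i \le 1\}$ to $\Delta$ via the barycentric map $\varphi(u) = \mathbf s_1 + \sum_{i=1}^k u_i(\mathbf s_{i+1} - \mathbf s_{i+1}\text{'s})$; more symmetrically, write a point of $\Delta$ as $\sum_{i=1}^{k+1}\mu_i \mathbf s_i$ with $\mu_i \ge 0$, $\sum\mu_i = 1$. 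Under this map $\int_\Delta \ell^D\,\mathrm dm = V\cdot\frac{1}{k!}\int_{\Delta_k}(\text{affine function of }u)^D\,du$, because the Jacobian of $\varphi$ relative to the Lebesgue measure $\mathrm dm$ is constant and equals $\mathrm{vol}(\Delta,\mathrm dm)/\mathrm{vol}(\Delta_k) = k!\,\mathrm{vol}(\Delta,\mathrm dm) = V$. On the standard simplex the linear form becomes $\ell\circ\varphi(u) = p_0 + \sum_{i=1}^k (p_i - p_0)u_i$ where $p_i := \langle\ell,\mathbf s_{i+1}\rangle$ for $i = 0,\dots,k$ (reindexing vertices $\mathbf s_1,\dots,\mathbf s_{k+1}$ as $p_0,\dots,p_k$); regularity of $\ell$ says the $p_i$ are pairwise distinct.

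The core computation is then the purely combinatorial identity
\begin{equation*}
\int_{\Delta_k}\Big(p_0 + \sum_{i=1}^k (p_i-p_0)u_i\Big)^{D}\,\mathrm du
= \frac{D!\,k!}{(D+k)!}\sum_{i=0}^{k}\frac{p_i^{\,D+k}}{\prod_{j\ne i}(p_i - p_j)}\,.
\end{equation*}
I would prove this by induction on $k$. For $k=1$, $\int_0^1 (p_0 + (p_1-p_0)u)^D\,du = \frac{p_1^{D+1} - p_0^{D+1}}{(D+1)(p_1-p_0)}$, which matches the right-hand side. For the inductive step, integrate out $u_k$ from $0$ to $1 - (u_1+\dots+u_{k-1})$: the inner integral of $(\text{affine})^D$ in $u_k$ raises the exponent to $D+1$ and divides by $(D+1)(p_k - p_0)$, producing a difference of two $(k-1)$-dimensional simplex integrals of degree $D+1$ — one with the vertex value $p_k$ replacing the ``$u_k$-free'' constant and one with $p_0$ — to which the inductive hypothesis applies. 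Collecting terms, the result is a second divided difference, and the algebraic identity $\frac{1}{p_k - p_0}\big(\frac{q(p_k)}{\prod_{j\ne k, j\ge?}} - \cdots\big)$ telescopes into the full symmetric sum $\sum_{i=0}^k p_i^{D+k}/\prod_{j\ne i}(p_i-p_j)$; this is the standard recursion for divided differences applied to the monomial $x^{D+k}$. Multiplying through by $V$ and substituting back $p_i = \langle\ell,\mathbf s_i\rangle$, $\langle\ell,\mathbf s_i - \mathbf s_j\rangle = p_i - p_j$ gives exactly \eqref{Brion_formula}.

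The main obstacle is bookkeeping in the inductive step: after integrating out one variable one must correctly identify the two lower-dimensional integrals, keep track of the factor $(p_k-p_0)^{-1}$, and verify that the two applications of the inductive hypothesis reassemble into the symmetric divided-difference sum rather than something asymmetric — in other words, checking that the partial-fraction/telescoping identity $[p_0,\dots,p_k]x^{N} = \frac{[p_1,\dots,p_k]x^N - [p_0,\dots,p_{k-1}]x^N}{p_k - p_0}$ (or the appropriate variant) truly yields $\sum_i p_i^N/\prod_{j\ne i}(p_i-p_j)$. An alternative that sidesteps the induction is to use the Laplace-transform (exponential generating function) trick: compute $\int_{\Delta_k} e^{\langle\ell,x\rangle}\,\mathrm dm = V\sum_{i}\frac{e^{p_i}}{\prod_{j\ne i}(p_i-p_j)}$ directly (this is the classical formula for the exponential integral over a simplex, itself provable by the same induction or by residue calculus), and then extract the degree-$D$ homogeneous part in $\ell$ from both sides — the left side contributes $\frac{1}{D!}\int_\Delta\ell^D\,\mathrm dm$ up to the measure normalization, and the right side contributes $V\sum_i \frac{p_i^{D+k}}{(D+k)!\prod_{j\ne i}(p_i-p_j)}$ since the lowest-degree terms in the numerator are annihilated by the denominator's symmetry. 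Either route reduces the proposition to a known special-function identity; I would present the inductive proof as primary since it is self-contained given only Fubini on the simplex.
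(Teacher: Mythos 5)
The paper does not prove this proposition at all: it is quoted as a classical result of Brion, imported from \cite{BBDKV11}, so there is no internal argument to compare yours against. Your proposed route --- reduce to the standard simplex by an affine change of variables, prove the resulting one-variable-at-a-time identity by induction via Fubini, and recognize the answer as the divided difference $[p_0,\dots,p_k]x^{D+k}=\sum_i p_i^{D+k}/\prod_{j\ne i}(p_i-p_j)$ --- is a standard and fully legitimate self-contained proof, and your alternative via the exponential integral $\int_\Delta e^{\langle\ell,x\rangle}\,\mathrm dm$ followed by extraction of the degree-$D$ part is equally valid. Supplying either argument is strictly more than the paper does.

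One concrete correction before you write it up: your two intermediate normalizations are each off by a factor of $k!$, and these errors cancel. Since $\mathrm{vol}(\Delta_k)=1/k!$, the constant Jacobian of $\varphi$ is $k!\,\mathrm{vol}(\Delta,\mathrm dm)=V$, so the change of variables gives $\int_\Delta\ell^D\,\mathrm dm=V\int_{\Delta_k}(\ell\circ\varphi)^D\,\mathrm du$, not $V\cdot\frac{1}{k!}\int_{\Delta_k}$. Correspondingly, the core identity is
\begin{equation*}
\int_{\Delta_k}\Bigl(p_0+\sum_{i=1}^k(p_i-p_0)u_i\Bigr)^{D}\mathrm du=\frac{D!}{(D+k)!}\sum_{i=0}^{k}\frac{p_i^{\,D+k}}{\prod_{j\ne i}(p_i-p_j)}\,,
\end{equation*}
\emph{without} the extra $k!$ you wrote; your version happens to agree at $k=1$ (where $k!=1$, so the base case hides the slip) but fails already for $k=2$, $D=1$, $p=(0,a,b)$, where the integral is $(a+b)/6$ while your formula gives $(a+b)/3$. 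Your own inductive step, carried out carefully (integrating out $u_k$ yields $\frac{1}{(D+1)(p_k-p_0)}$ times a difference of two $(k-1)$-dimensional integrals of degree $D+1$, and the divided-difference recursion reassembles the symmetric sum), produces exactly the constant $\frac{D!}{(D+k)!}$ and thus confirms the corrected version. With these two normalizations fixed the argument closes and yields \eqref{Brion_formula} as stated.
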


We note that, even when $\ell$ is not regular, there exists a similar expansion of the integral as a sum of residues (see e.g. \cite[corollary 13]{BBDKV11}). 

Thus, once we represent a polynomial by a sum of power of rational linear forms, the integration immediately follows. And bounding the number of rational linear forms in the sum and finding all its $\mathbb{Q}$-coefficients are among the main issues for the computational complexity of evaluating integrals of polynomials over a rational simplex. 

Now, let us consider the number of the summands in a given rational Waring decomposition of $f_{gen}$, a \ti{general} homogeneous polynomial of degree $D$ in $(n+1)$-variables (here, we mean a `general' form by the one having all the monomials of total degree $D$).

In \cite{BBDKV11}, the authors used the following well-known identity, which is somewhat naive from the viewpoint of Waring rank, to consider their rational decomposition of $f_{gen}$,
\begin{equation}\label{naive_decomposition}
X_0^{a_0}X_1^{a_1}\cdots X_n^{a_n}=\frac{1}{D!}\sum_{0\leq p_i\leq a_i} (-1)^{D-(p_0+\cdots +p_n)} \tbinom{a_0}{p_0}\cdots\tbinom{a_n}{p_n}(p_0 X_0+\cdots+p_n X_n)^{D}~,
\end{equation}
where $\mathbf{a}=(a_0,\dots,a_n)$ and $D=|\mathbf{a}|=a_0+\cdots+a_n$. To count the number of summands properly, one should group together proportional linear forms among the whole decomposition of $f_{gen}$. The concept of \textit{primitive} vectors (i.e. $(p_0,\ldots,p_n)\in \mathbb{Z}_{\geq 0}^n$ with $\gcd(p_0,\ldots,p_n)=1$) precisely captures this number. Let $F(n,D)$ be this number of minimal summands in the rational Waring decomposition of $f_{gen}$ using (\ref{naive_decomposition}). Then, it is shown in \cite[lemma 16]{BBDKV11} that $F(n,D)$ is equal to
\begin{equation}\label{F(n,M)}
|\{(p_0,\ldots,p_n)\in \mathbb{Z}_{\geq 0}^n, \gcd(p_0,\ldots,p_n)=1,1\leq \sum_i p_i\leq D\}|=\sum_{d=1}^D \mu(d)\cdot\left(\binom{n+1+\lfloor\frac{D}{d}\rfloor}{n+1}-1\right)~,
\end{equation}
where $\mu$ is the M\"{o}bius function. 

Now, let us estimate this number as increasing the total degree in a fixed number of variables. As getting large $D$ with a fixed $n$, by (\ref{F(n,M)}) the asymptotic behavior of $F(n,D)$ can be calculated as
\begin{equation}\label{asymp_BBDKV}
F(n,D)=\frac{\delta(n,D)}{(n+1)!}D^{n+1}+O(D^n)~,
\end{equation}
where $\delta(n,D)=\ds\sum_{d=1}^D \frac{\mu(d)}{d^{n+1}}$. Since the Dirichlet series that generates the M\"{o}bius function is the  inverse of the Riemann zeta function $\zeta(s)$, which converges for $Re(s)>1$, we see that $\delta(n,D)\rightarrow\frac{1}{\zeta(n+1)}$ as $D\rightarrow \infty$ and $F(n,D)$ asymptotically has order of $D^{n+1}$ in this setting.


On the other hand, if we regard the rational Waring decomposition of $f_{gen}$ by considering each monomial summand using our result, that is, Corollary \ref{reduced_decomp}, much less linear forms are needed to represent the polynomial asymptotically. Let $K(n,D)$ be the number of minimal summands in the rational Waring decomposition of $f_{gen}$ via (\ref{eqeq}). 

\begin{theorem}\label{Knm}
Let $n\ge1, D\ge1$ be positive integers and $K(n,D)$ be as above. Then, we have the following formula
\begin{equation}\label{Knm_formula}
K(n,D)=\sum_{r=1}^{n+1} \left\{\binom{\lfloor\frac{D-r}{2}\rfloor+r}{r}-\binom{\lfloor\frac{D-r}{2}\rfloor}{r}\right\}\cdot2^{r-1}\cdot \binom{n+1}{r}.
\end{equation}
In particular, when $n$ is fixed and $D\rightarrow \infty$, we have
\begin{equation}\label{asymp_HM}
K(n,D)=\frac{n+1}{n!}D^n+O(D^{n-1})~,
\end{equation}
which has order of $D^{n}$ asymptotically.
\end{theorem}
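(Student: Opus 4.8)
The plan is to identify $K(n,D)$ with the number of distinct linear forms, up to a nonzero scalar, occurring with nonzero coefficient in the decompositions (\ref{eqeq}) of all monomials $X_0^{a_0}\cdots X_n^{a_n}$ with $|\mathbf{a}|=D$, and to organize the count by the size $r$ of the support. Since $\ell_{A,\mathbf{k},\mathbf{s}}=\sum_{i\notin A}(-1)^{s_i}t^{k_i}X_i$ has support $S:=\{0,\dots,n\}\setminus A$, the set $A$ is read off from the form; and for generic $t$ (allowed because $\kk$ is infinite and $t$ is a free parameter, so $t^a\ne\pm t^b$ for $a\ne b$ in the relevant range) two such forms are proportional iff they share the same $S$, the same $\mathbf{s}$, and $\mathbf{k}$-vectors differing by a constant vector. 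Thus normalizing the coefficient of $X_{\min S}$ to $+1$ — i.e. imposing $\mathbf{k}\in\overline{K_A}$ and $\mathbf{s}\in\overline{S_A}$, which is exactly the index set over which (\ref{eqeq}) sums — selects one representative per proportionality class. Hence $K(n,D)$ counts the triples $(S,\mathbf{k},\mathbf{s})$ with $\emptyset\ne S\subset\{0,\dots,n\}$, $\mathbf{k}=(k_i)_{i\in S}\in\mathbb{Z}_{\ge0}^{S}$ with $\min_{i\in S}k_i=0$, and $s_i\in\{0,1\}$ with $s_{\min S}=0$, that genuinely occur (with $\overline{C_{A,\mathbf{k},\mathbf{s}}}\not\equiv0$ in $t$, a fact I would record as a short lemma from the explicit product shape of the $F_i$'s) in the decomposition of at least one monomial of degree $D$.

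\textbf{Which triples occur.}
With $A=\{0,\dots,n\}\setminus S$, the triple $(S,\mathbf{k},\mathbf{s})$ appears in the decomposition of $X^{\mathbf{a}}$ precisely when $Z_{\mathbf{a}}\subset A\subset E_{\mathbf{a}}$ and $\mathbf{k}\in\overline{K_A}$ for that $\mathbf{a}$, which unwinds to $a_i\ge1$ for $i\in S$, $a_i$ even (possibly $0$) for $i\notin S$, and $a_i\ge 2k_i+1$ for $i\in S$ (since $k_i\le m_i=\lfloor(a_i-1)/2\rfloor$). Writing $r=|S|$ and $K=\sum_{i\in S}k_i$, the minimal admissible $\mathbf{a}$ takes $a_i=2k_i+1$ on $S$ and $0$ off $S$, of total degree $2K+r$; as the entries indexed by $S$ carry no parity restriction, any surplus $D-(2K+r)\ge0$ may be dumped onto a single variable of $S$. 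Therefore a degree-$D$ witness exists iff $2K+r\le D$, i.e. $K\le N_r:=\lfloor(D-r)/2\rfloor$.

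\textbf{Counting and asymptotics.}
For fixed $r$ there are $\binom{n+1}{r}$ choices of $S$ and $2^{r-1}$ choices of $\mathbf{s}$; the admissible $\mathbf{k}$ are the $(k_i)_{i\in S}\in\mathbb{Z}_{\ge0}^{r}$ with $\min k_i=0$ and $\sum k_i\le N_r$. The number of all $(k_i)$ with $\sum k_i\le N_r$ is $\binom{N_r+r}{r}$, and subtracting those with every $k_i\ge1$ (namely $\binom{N_r}{r}$, via $k_i\mapsto k_i-1$) leaves $\binom{N_r+r}{r}-\binom{N_r}{r}$. Summing $\bigl(\binom{N_r+r}{r}-\binom{N_r}{r}\bigr)2^{r-1}\binom{n+1}{r}$ over $r=1,\dots,n+1$ gives (\ref{Knm_formula}). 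For $D\to\infty$ with $n$ fixed, the $r=n+1$ term dominates: there $2^{r-1}\binom{n+1}{r}=2^{n}$, and writing $M=N_{n+1}=\tfrac{D}{2}+O(1)$, the difference $\binom{M+n+1}{n+1}-\binom{M}{n+1}$ is a polynomial in $M$ whose leading term is $\tfrac{(n+1)^2}{(n+1)!}M^{n}=\tfrac{n+1}{n!}M^{n}$, so this term equals $2^{n}\cdot\tfrac{n+1}{n!}(D/2)^{n}+O(D^{n-1})=\tfrac{n+1}{n!}D^{n}+O(D^{n-1})$, while each term with $r\le n$ is $O(N_r^{r-1})=O(D^{n-1})$. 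This yields (\ref{asymp_HM}).

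\textbf{Main obstacle.}
The real content is the passage in the first two steps from ``occurs in (\ref{eqeq}) for some degree-$D$ monomial'' to the arithmetic inequality $\sum_{i\in S}k_i\le\lfloor(D-|S|)/2\rfloor$: this needs the scaling-reduction bookkeeping making $(S,\mathbf{k},\mathbf{s})$ a faithful label for a linear form up to scalar, the lemma that $\overline{C_{A,\mathbf{k},\mathbf{s}}}$ is not identically zero in $t$, and the parity analysis of which monomials realize a given label. Once these are in place, the closed formula and its asymptotics are elementary binomial manipulations.
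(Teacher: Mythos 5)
Your proposal is correct and follows essentially the same route as the paper: organize the linear forms by the size $r$ of their support, show that the admissible $\mathbf{k}$ for a given support are exactly those with $\min k_i=0$ and $\sum k_i\le\lfloor(D-r)/2\rfloor$, count them as $\binom{\lfloor\frac{D-r}{2}\rfloor+r}{r}-\binom{\lfloor\frac{D-r}{2}\rfloor}{r}$ times $2^{r-1}\binom{n+1}{r}$, and extract the asymptotics from the $r=n+1$ term. Your explicit witness construction and the remarks on faithfulness of the labels and nonvanishing of $\overline{C_{A,\mathbf{k},\mathbf{s}}}$ only make more careful what the paper treats implicitly.
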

\begin{proof}
Let $L_{\{i_1,\ldots,i_r\}}$ be the set of all linear forms appeared in the Waring decomposition of $f_{gen}$ via (\ref{eqeq}) such that every member of $L_{\{i_1,\ldots,i_r\}}$ is of the form $\lambda_1 X_{i_1}+\lambda_2 X_{i_2}+\cdots+\lambda_r X_{i_r}$ for some nonzero $\lambda_u$'s. Then, by the proof of Corollary     \ref{reduced_decomp} we have
{\small\begin{align*}
L_{\{i_1,\ldots,i_r\}}:=\bigg\{\sum_{j=1}^r (-1)^{s_{i_j}}t^{k_{i_j}}X_{i_j}~|~&\sum_{j=1}^r a_{i_j}=D,~0\leq k_{i_j}\leq m_{i_j}=\lfloor\frac{a_{i_j}-1}{2}\rfloor\\
&, \min\{k_{i_j}\}=0,~s_{i_j}\in \{0,1\}, \mbox{~and~}s_{i_1}=0\bigg\}~.
\end{align*}}
In this set, each $(k_{i_1},k_{i_2},\ldots,k_{i_r})$ should satisfy the following inequality
$$0\leq \sum_{j=1}^r k_{i_j}\leq \sum_{j=1}^r m_{i_j}=\sum_{j=1}^r \lfloor\frac{a_{i_j}-1}{2}\rfloor~\cdots~ (\ast\ast)$$
and note that both $\{a_{i_1},a_{i_2},\ldots,a_{i_r}\}$ and $\{a_{i_1},\ldots,a_{i_p}-1,\ldots,a_{i_q}+1,\ldots, a_{i_r}\}$ have the same $r$-tuple of $k_{i_j}$'s satisfying $(\ast\ast)$ whenever $a_{i_p}$ and $a_{i_q}$ are all even. Thus, we can assume that at most one of $a_{i_j}$ is even.
 If $D\equiv r~(\mbox{mod}~2)$ and all the $a_{i_j}$ are odd, $\sum_{j=1}^r m_{i_j}=\sum_{j=1}^r \frac{a_{i_j}-1}{2}=\frac{D-r}{2}$. If $D\not\equiv r(\mbox{mod}~2)$ and all the $a_{i_j}$ are odd except one, $\sum_{j=1}^r m_{i_j}=\frac{D-r-1}{2}$. Hence, for a given $D$, we get 
{\small\begin{align*}
L_{\{i_1,\ldots,i_r\}}:=\bigg\{\sum_{j=1}^r (-1)^{s_{i_j}}t^{k_{i_j}}X_{i_j}~|~&0\leq \sum_{j=1}^r k_{i_j}\leq \lfloor\frac{D-r}{2}\rfloor, \min\{k_{i_j}\}=0, s_{i_j}\in \{0,1\}, \mbox{~and~}s_{i_1}=0\bigg\}~
\end{align*}}
so that the number of elements in $L_{\{i_1,\ldots,i_r\}}$ as follows 
\begin{align*}
|L_{\{i_1,i_2,\ldots,i_r\}}|&=\bigg|\bigg\{(k_{i_j})~|~0\leq \sum_{j=1}^r k_{i_j}\leq \lfloor\frac{D-r}{2}\rfloor, \min\{k_{i_j}\}=0\bigg\}\bigg|\cdot\bigg|\bigg\{(s_{i_j})~|~s_{i_j}\in \{0,1\}, \mbox{~and~}s_{i_1}=0\bigg\}\bigg|\\
&=\bigg\{\binom{\lfloor\frac{D-r}{2}\rfloor+r}{r}-\binom{\lfloor\frac{D-r}{2}\rfloor}{r}\bigg\}\cdot 2^{r-1}~.
\end{align*}
Finally, as multiplying $\binom{n+1}{r}$ for the possible choices for the indices $\{i_1,\ldots,i_r\}\subset \{0,\ldots,n\}$, the equation (\ref{Knm_formula}) is obtained.

As getting large $D$ with fixed $n$, the highest power of $D$ is obtained when $r=n+1$. So, we estimate as 
{\small\begin{align*}
K(n,D)&\approx\left\{\binom{\frac{D-(n+1)}{2}+n+1}{n+1}-\binom{\frac{D-(n+1)}{2}}{r}\right\}\cdot2^{n}\cdot \binom{n+1}{n+1}=\left\{\binom{\frac{D+(n+1)}{2}}{n+1}-\binom{\frac{D-(n+1)}{2}}{r}\right\}\cdot2^{n}\\
&=\left(\frac{D^{n+1}+(n+1)^2D^n+O(D^{n-1})}{2^{n+1}(n+1)!}-\frac{D^{n+1}-(n+1)^2D^n+O(D^{n-1})}{2^{n+1}(n+1)!}\right)\cdot 2^n\\
&=\frac{n+1}{n!}D^n+O(D^{n-1})~,
\end{align*}}
where the asymptotic order is $D^n$ which is better than $D^{n+1}$ in the case of $F(n,D)$.
\end{proof}

\begin{remark}\label{F&K} We make some remarks on the theorem above.
\begin{enumerate}
\item[(a)] The formula (\ref{Knm_formula}) gives a new upper bound for $\mathbb{Q}$-Waring rank of arbitrary rational homogeneous polynomial. Unfortunately, this is not better than a naive bound ${D+n\choose n}$, which comes from $\dim \mathbb{Q}[X_0,X_1,\dots,X_n]$, asymptotically. But, the latter approach does not give an explicit linear forms of the power sum decomposition and its coefficients (remember that one should execute a massive computation to find them), whereas our method does provide a completely determined(!) power sum decomposition.
\item[(b)] Note that $F(n,D)$ and $K(n,D)$ have different orders in the above asymptote by (\ref{asymp_BBDKV}) and (\ref{asymp_HM}) (see also Table \ref{compareTable2} for a significant difference between $F(n,D)$ and $K(n,D)$). 
\item[(c)] In \cite{BBT} the authors also provide a Waring decomposition of any monomial with determined coefficients over the complex number $\mathbb{C}$. Since their rank is better than the bound (\ref{main_bd}), the approach using their decomposition would be surely better than the method via (\ref{eqeq}) for decomposing any homogeneous form. But, note that in their coefficient formula a complex number does occur in most cases (!), which puts a serious limitation for applying their method in application over real or rational numbers. 
\end{enumerate}
\end{remark}



\begin{table}[htb]
\small{\begin{tabular}{ l  r r r  r r r  r r r}
\toprule
$(n,D)$		& $(2,10)$	& (2,50) 	& (2,100) 	& (3,10) 	& (3,50) 		& (3,100) & (5,30) & (5,50)& (5,100)\\
			\midrule
$F(n,D)$		&205		&18,970	&144,871	&831	    	&286,893		&4,207,287 & 1,884,921 & 31,651,125 & 1,669,982,466\\
\hline
$K(n,D)$ 		& 133	& 3,613	& 14,713	& 696	&83,416		& 666,816 & 1,305,092&16,001,276 & 502,701,736\\
\bottomrule
\end{tabular}}\\[0.25\baselineskip]
\caption{Comparison of numbers of summands in the two Waring decompositions of $f_{gen}$, a general homogeneous polynomial of degree $D$ in $n+1$ variables, based on a previously known method (\ref{naive_decomposition}) in \cite{BBDKV11} and the method in this paper (\ref{eqeq})}\label{compareTable2}
\end{table}

\section{Macaulay2 code for the decompositions}\label{sect_M2code}

Finally, we present a \textsc{Macaulay2}\cite{M2} code which computes the Waring-type polynomial identity concerning any given monomial over $\kk$ in Section \ref{sect_main} and we execute it for $X_0^4X_1^3X_2^2$, the case in Example \ref{432case}.

\begin{lstlisting}

+ M2 --no-readline --print-width 79
Macaulay2, version 1.17
with packages: ConwayPolynomials, Elimination, IntegralClosure, 
		InverseSystems,LLLBases, MinimalPrimes, 
		PrimaryDecomposition, ReesAlgebra,
		Saturation, TangentCone

i1 : --For a given sequence a, find all the pairs (A,k,s)
     Akslister=method();

i2 : Akslister(List):=a->(
         n:=#a-1;
         Akslist:={};
         E:=for i from 0 to n list if(even(a_i)) then i else continue;
         Z:=for i from 0 to n list if(a_i==0) then i else continue;
         Alist0:=subsets(toList(set(E)-set(Z)));
         Alist:=for i from 0 to #Alist0-1 list sort(Z|Alist0_i);
         for i1 from 0 to #Alist-1 do if(#(Alist_i1)!=n+1) then (
         A:=Alist_i1;
         notInA:=sort toList(set(0..n)-set(A));
         KA:=toList((for i from 0 to #notInA-1 list 0)..
         	(for i from 0 to #notInA-1 list floor((a_(notInA_i)-1)/2)));
         KAbar:=for i from 0 to #KA-1 list if(min(KA_i)==0) 
         		then KA_i else continue;
         SA:=toList((for i from 0 to #notInA-1 list 0)..
         		({0}|for i from 1 to #notInA-1 list 1));
         for i2 from 0 to #KAbar-1 do (
     	 	kk:=KAbar_i2;
	 	for i3 from 0 to #SA-1 do(
     	  		ss:=SA_i3;
     	    		Akslist=append(Akslist,{A,notInA,kk,ss});
		)
	)
	)
	else continue;
	Akslist
     );

i3 : --For a given pair (A,k,s), find a linear form l_{A,k,s}
     linform=method();

i4 : linform(Ring, List):=(R,Aks)->(
         (A,notInA,kk,I):=toSequence(Aks);
         bR:=baseRing(R);
         t0:=sub(2,R);
         if(numgens bR!=0) then t0=bR_0;
         sum for i from 0 to #notInA-1 list 
         	(-1)^(I_i)*(t0)^(kk_i)*R_(notInA_i)
         );

i5 : --For a given monomial, find all the linear forms
     linforms=method();

i6 : linforms(RingElement):=(mon)->(
         R:=ring mon;
         a:=(exponents mon)_0;
         Akslist:=Akslister(a);
         for i from 0 to #Akslist-1 list linform(R,Akslist_i)
         );

i7 : --For a given sequence a, find F_i
     Flist=method();

i8 : Flist(List,Ring,ZZ):=(a,S,ind)->(
         mind:=floor((a_ind-1)/2);
         bS=baseRing(S);
         t0:=sub(2,S);
         if(numgens bS!=0) then t0=bS_0;
         if(mind<=0) then sub(1,S) else 
         	sub(product for i from 1 to mind list
		S_0-(t0)^(a_ind-2*i),S)
         );

i9 : --For a given pair (A,k,s), find a coefficient C_{A,k,s}
     cfs=method();

i10 : cfs(List,Ring,List):=(a,R,Aks)->(
          (A,notInA,kk,ss):=toSequence(Aks);
          bR:=baseRing(R);
          S:=bR[Y];
          C1:=(-2)^(#A)*product for i from 0 to #A-1 list 
          	sub(Flist(a,S,A_i),sub(matrix{{1}},bR));
          C2:=product for i from 0 to #notInA-1 list 
          	sub(coefficient((S_0)^(kk_i),Flist(a,S,notInA_i)),bR);
          C3:=sub((-1)^(sum for i from 0 to #ss-1 list 
          	a_(notInA_i)*ss_i),bR);
          C1*C2*C3
          );

i11 : --For a given pair (A,k,s), find a principle coefficient C_{A,k,s}
      pcfs=method();

i12 : pcfs(List,Ring,List):=(a,R,Aks)->(
          (A,notInA,kk,ss):=toSequence(Aks);
          bR:=baseRing(R);
          t0:=sub(2,R);
          if(numgens bR!=0) then t0=bR_0;
          sum for j from 0 to min(for i from 0 to #notInA-1 
          	list floor((a_(notInA_i)-1)/2)-kk_i) list 
          	(t0)^(j*(sum a))*
		cfs(a,R,{A,notInA,kk+(for i from 0 to #notInA-1 list j),ss})
          );

i13 : --For a given monomial, find the coefficient D_a
      D=method();

i14 : D(RingElement):=(mon)->(
          a:=(exponents(mon))_0;
          R:=ring mon;
          bR:=baseRing(R);
          S:=bR[Y];
          t0:=sub(2,R);
          if(numgens bR!=0) then t0=bR_0;
          Z:=for i from 0 to #a-1 list if(a_i==0) then i else continue;
          sub((-1)^(#Z)*2^(#a-1)*((sum a)!/(product for i from 0 to #a-1 list (a_i)!))*
          	(product for i from 0 to #a-1 
		list sub(Flist(a,S,i),matrix{{t0^(a_i)}})),bR)
          );

i15 : --For a given monomial, find all the coefficients
      coeffs=method();

i16 : coeffs(RingElement):=(mon)->(
          R:=ring mon;
          a:=(exponents(mon))_0;
          pl:=Akslister(a);
          for i from 0 to #pl-1 list pcfs(a,R,pl_i)
          );

i17 : --Test for m=X_0^4X_1^3X_2^2
      --Ring over a fractional field of Q[t]
      T=QQ[t]

o17 = T

o17 : PolynomialRing

i18 : fT=frac T

o18 = fT

o18 : FractionField

i19 : R=fT[X_0..X_2]

o19 = R

o19 : PolynomialRing

i20 : m=X_0^4*X_1^3*X_2^2

       4 3 2
o20 = X X X
       0 1 2

o20 : R

i21 : --linearforms related to m
      ls=linforms(m);

i22 : --corresponding coefficients related to m
      cs=coeffs(m);
i23 : --check the equality of RHS and LHS of the Corollary
      rhs=sum for i from 0 to #cs-1 list cs_i*(ls_i)^((degree m)_0)

            7         5        3  4 3 2
o23 = (5040t  - 10080t  + 5040t )X X X
                                  0 1 2

o23 : R

i24 : lhs=D(m)*m

            7         5        3  4 3 2
o24 = (5040t  - 10080t  + 5040t )X X X
                                  0 1 2

o24 : R

i25 : lhs==rhs

o25 = true
\end{lstlisting}

%
%
%

\section*{Acknowledgments}
The first author was supported by the National Research Foundation of Korea (NRF) grant funded by the Korea government (MSIT) (No. 2021R1F1A104818611) and the second author was supported by KIAS Individual Grant (MG083101) at Korea Institute of Advanced Study (KIAS).

\end{document}